\theoremstyle{plain}
\newtheorem{theorem}{Theorem}[section]
\newtheorem{lemma}[theorem]{Lemma}
\newtheorem{proposition}[theorem]{Proposition}
\newtheorem{corollary}[theorem]{Corollary}
\theoremstyle{definition}
\newtheorem{question}[theorem]{Question}
\newcommand{\re}{\upharpoonright}
\newcommand{\cl}{\mathsf{cl}}
\newcommand{\dom}{\mathsf{dom}}
\newcommand{\mini}{\mathsf{min}}
\newcommand{\maxi}{\mathsf{max}}
\newcommand{\Exp}{\mathsf{Exp}}
\newcommand{\rank}{\mathsf{rank}}
\newcommand{\kernel}{\mathsf{ker}}
\newcommand{\kc}{\mathsf{kc}}
\newcommand{\kcs}{\mathsf{kc}^\ast}
\newcommand{\weight}{\mathsf{w}}
\newcommand{\zero}{\mathbb{0}}
\newcommand{\one}{\mathbb{1}}
\newcommand{\down}{\!\downarrow\,}
\newcommand{\RO}{\mathsf{RO}}
\newcommand{\SFIP}{\mathsf{SFIP}}
\newcommand{\Aa}{\mathcal{A}}
\newcommand{\BB}{\mathcal{B}}
\newcommand{\CC}{\mathcal{C}}
\newcommand{\DD}{\mathcal{D}}
\newcommand{\FF}{\mathcal{F}}
\newcommand{\GG}{\mathcal{G}}
\newcommand{\KK}{\mathcal{K}}
\newcommand{\OO}{\mathcal{O}}
\newcommand{\PP}{\mathcal{P}}
\newcommand{\UU}{\mathcal{U}}
\newcommand{\VV}{\mathcal{V}}
\newcommand{\BBB}{\mathbb{B}}
\newcommand{\PPP}{\mathbb{P}}
\newcommand{\QQQ}{\mathbb{Q}}
\newcommand{\RRR}{\mathbb{R}}
\newcommand{\cccc}{\mathfrak{c}}
\newcommand{\dddd}{\mathfrak{d}}
\newcommand{\pppp}{\mathfrak{p}}
\begin{document}

\title[Countable spaces and realcompactness]{Countable spaces, realcompactness, and the pseudointersection number}

\author{Claudio Agostini}
\address{Institut f\"{u}r Diskrete Mathematik und Geometrie
\newline\indent Technische Universit\"{a}t Wien
\newline\indent  Wiedner Hauptstra\ss e 8–-10/104
\newline\indent 1040 Vienna, Austria}
\email{claudio.agostini@tuwien.ac.at}
\urladdr{http://www.dmg.tuwien.ac.at/agostini/}

\author{Andrea Medini}
\address{Institut f\"{u}r Diskrete Mathematik und Geometrie
\newline\indent Technische Universit\"{a}t Wien
\newline\indent  Wiedner Hauptstra\ss e 8–-10/104
\newline\indent 1040 Vienna, Austria}
\email{andrea.medini@tuwien.ac.at}
\urladdr{http://www.dmg.tuwien.ac.at/medini/}

\author{Lyubomyr Zdomskyy}
\address{Institut f\"{u}r Diskrete Mathematik und Geometrie
\newline\indent Technische Universit\"{a}t Wien
\newline\indent  Wiedner Hauptstra\ss e 8–-10/104
\newline\indent 1040 Vienna, Austria}
\email{lyubomyr.zdomskyy@tuwien.ac.at}
\urladdr{http://www.dmg.tuwien.ac.at/zdomskyy/}

\date{October 26, 2023}

\begin{abstract}
All spaces are assumed to be Tychonoff. Given a realcompact space $X$, we denote by $\Exp(X)$ the smallest infinite cardinal $\kappa$ such that $X$ is homeomorphic to a closed subspace of $\RRR^\kappa$. Our main result shows that, given a cardinal $\kappa$, the following conditions are equivalent:
\begin{itemize}
\item There exists a countable crowded space $X$ such that $\Exp(X)=\kappa$,
\item $\pppp\leq\kappa\leq\cccc$.
\end{itemize}
In fact, in the case $\dddd\leq\kappa\leq\cccc$, every countable dense subspace of $2^\kappa$ provides such an example. This will follow from our analysis of the pseudocharacter of countable subsets of products of first-countable spaces. Finally, we show that a scattered space of weight $\kappa$ has pseudocharacter at most $\kappa$ in any compactification. This will allow us to calculate $\Exp(X)$ for an arbitrary (that is, not necessarily crowded) countable space.
\end{abstract}

\subjclass[2020]{Primary 54D60, 03E17; Secondary 54G12.}

\keywords{Countable, crowded, realcompact, pseudointersection number, dominating number, pseudocharacter, scattered.}

\maketitle

\tableofcontents

\section{Introduction}\label{section_introduction}

By \emph{space} we will mean Tychonoff topological space, and by \emph{countable} we will mean finite or countably infinite. Recall that a space is \emph{realcompact} if it is homeomorphic to a closed subspace of $\RRR^\kappa$ for some cardinal $\kappa$. Given a realcompact space $X$, we will denote by $\Exp(X)$ the smallest infinite cardinal $\kappa$ such that $X$ is homeomorphic to a closed subspace of $\RRR^\kappa$. We will call $\Exp(X)$ the \emph{realcompactness number} of $X$.

It is well-known that Lindel\"{o}f spaces are realcompact (see Theorem \ref{theorem_Exp} or \cite[Theorem 3.11.12]{engelking}). As pointed out by van Douwen (see \cite[Exercise 8.23]{van_douwen}), for every infinite cardinal $\kappa\leq\cccc$ there exists a separable metrizable crowded space $X$ such that $\Exp(X)=\kappa$.\footnote{\,Recall that a \emph{Bernstein set} in $2^\omega$ is a set $B\subseteq 2^\omega$ such that every compact subset of $B$ is countable and every compact subset of $2^\omega\setminus B$ is countable. Fix a Bernstein set $B$ in $2^\omega$, and let $X$ be such that $B\subseteq X\subseteq 2^\omega$ and $|2^\omega\setminus X|=\kappa$. Using Theorem \ref{theorem_Exp}, it is straightforward to check that $\Exp(X)=\kappa$.} However, the situation turns out to be more interesting for countable crowded spaces. More specifically, the following is our main result, which exhibits an unexpected connection between realcompactness and the pseudointersection number $\pppp$.

\begin{theorem}\label{theorem_main}
Given a cardinal $\kappa$, the following are equivalent:
\begin{itemize}
\item There exists a countable crowded space $X$ such that $\Exp(X)=\kappa$,
\item $\pppp\leq\kappa\leq\cccc$.	
\end{itemize}
\end{theorem}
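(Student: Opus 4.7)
The plan is to split the equivalence into its two implications, both relying on Theorem~\ref{theorem_Exp} to translate $\Exp(X)$ into more tractable invariants, namely the weight $w(X)$ and the pseudocharacter $\psi(X,K)$ of $X$ inside a suitable compactification $K$.

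For the forward implication (existence of such $X$ forces $\pppp\leq\kappa\leq\cccc$), the upper bound $\kappa\leq\cccc$ is soft: a countable Tychonoff space has at most $\cccc$ open sets, hence weight at most $\cccc$, and Theorem~\ref{theorem_Exp} then yields a closed embedding into $\RRR^\cccc$. The lower bound $\kappa\geq\pppp$ is the real content. Following the lead of the abstract, I would prove as a key lemma that if $X$ is a countable crowded space embedded as a closed subspace of a product $\prod_{\alpha<\lambda}Y_\alpha$ of first-countable spaces, then $\lambda\geq\pppp$; applying this to $Y_\alpha=\RRR$ gives the claim. The intended argument enumerates $X=\{x_n:n<\omega\}$ and, assuming $\lambda<\pppp$, uses the first countability of each $Y_\alpha$ to equip each $x_n$ with a countable decreasing neighborhood base in the $\alpha$-th coordinate. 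Crowdedness lets us build, from these data and from the open set $\prod_\alpha Y_\alpha\setminus X$, a family of infinite subsets of $\omega$ with the strong finite intersection property; since $|\lambda|<\pppp$, this family admits a pseudointersection $A\subseteq\omega$, along which the sequence $(x_n)_{n\in A}$ clusters at a point outside $X$, contradicting closedness.

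For the converse direction (producing $X$ with $\Exp(X)=\kappa$ for each $\pppp\leq\kappa\leq\cccc$), I would split into two ranges. For $\dddd\leq\kappa\leq\cccc$, as hinted in the abstract, I take $X\subseteq 2^\kappa$ any countable dense subspace. The compactification $K=2^\kappa$ has $w(K)=\kappa$, and a short cofinality argument (using that, for $\kappa$ of uncountable cofinality, a countable union of $G_\kappa$-singletons in a product of length $\kappa$ of compact metrizable factors is automatically $G_\kappa$) gives $\psi(X,K)\leq\kappa$, hence $\Exp(X)\leq\kappa$. The matching lower bound $\Exp(X)\geq\kappa$ would come from showing that no compactification of $X$ of weight strictly below $\kappa$ can admit $X$ as a $G_{<\kappa}$-set; this is where the assumption $\kappa\geq\dddd$ is genuinely used, via an unbounded family in $\omega^\omega$ of cardinality $\kappa$. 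For the narrower range $\pppp\leq\kappa<\dddd$, the dense-in-$2^\kappa$ construction no longer suffices, and I would instead tailor $X$ to a combinatorial object of size exactly $\kappa$ (a tower, or an unbounded family of cardinality $\kappa$ that is not dominating), using it to prescribe simultaneously the topology on $X$ and its preferred compactification so as to hit $\kappa$ on the nose.

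The main obstacle I anticipate is the key lemma in the forward direction: extracting a \emph{single} SFIP-family that simultaneously reflects \emph{all} coordinates and \emph{all} points of $X$, so that one pseudointersection yields a contradiction (rather than merely giving $\lambda\geq\aleph_1$). A secondary difficulty is the construction for $\pppp\leq\kappa<\dddd$, where the easier dense-embedding construction fails and one must carefully balance $w(X)$ against $\psi(X,\beta X)$ so that their maximum is exactly $\kappa$, without accidentally producing a smaller value.
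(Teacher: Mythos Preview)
Your claim that the upper bound $\kappa\leq\cccc$ is ``soft'' is a genuine error. Knowing $\weight(X)\leq\cccc$ does \emph{not} give $\Exp(X)\leq\cccc$ via Theorem~\ref{theorem_Exp}: that theorem says $\Exp(X)=\max\{\weight(X),\kcs(X)\}$, so you must also bound $\kcs(X)$, and $\beta X$ can have size up to $2^{\cccc}$ for countable $X$. The paper establishes $\Exp(X)\leq\cccc$ precisely through the pseudocharacter analysis you allude to only later (its Corollary~\ref{corollary_upper_bound}, via Theorem~\ref{theorem_pseudocharacter}): embed $X$ in $2^{\cccc}$ and show $\psi(X,2^{\cccc})\leq\cccc$. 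This is not hard, but it is not the one-liner you wrote.

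For the lower bound $\kappa\geq\pppp$, your SFIP-plus-pseudointersection sketch is in the right spirit but is not the paper's argument. The paper (Theorem~\ref{theorem_lower_bound_omega}) instead builds a $\sigma$-centered poset of finite restrictions of elements of $X$ and invokes Bell's Theorem to obtain a generic limit point in $\omega^\kappa\setminus X$. This sidesteps exactly the obstacle you flag: rather than assembling a single SFIP-family by hand, one lets a filter meeting $|X|+\kappa<\pppp$ dense sets do the work.

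For the converse, your strategy diverges substantially from the paper's. You split at $\dddd$ and leave the range $\pppp\leq\kappa<\dddd$ to a vague tower/unbounded-family construction. The paper takes a different route: it first builds, with considerable effort, a single example with $\Exp(X)=\pppp$ (Theorem~\ref{theorem_example_p}, using an independent family of size $\pppp$ with no pseudointersection, a Mathias-style poset, and its Stone space), and then for arbitrary $\pppp\leq\kappa\leq\cccc$ simply adjoins one point of character $\kappa$ to bump the weight (Theorem~\ref{theorem_example_general}), invoking Proposition~\ref{proposition_finite_decomposition} to keep $\Exp$ from overshooting. Your $\dddd\leq\kappa\leq\cccc$ case does match the paper's Corollary~\ref{corollary_dense_subspace}, but your plan for $\pppp\leq\kappa<\dddd$ is not developed enough to assess, and this is exactly where the paper invests most of its work.
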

\begin{proof}
This follows from Corollary \ref{corollary_lower_bound}, Theorem \ref{theorem_example_general} and Corollary \ref{corollary_upper_bound}.
\end{proof}

After the preliminaries discussed in Sections \ref{section_preliminaries} and \ref{section_calculating}, we will devote the entirety of Sections \ref{section_lower}, \ref{section_p} and \ref{section_bumping} to the proof of the above theorem. In Section \ref{section_scattered}, we will show that a scattered space of weight $\kappa$ has pseudocharacter at most $\kappa$ in any compactification (see Theorem \ref{theorem_scattered}). While this result seems to be of independent interest, it will allow us to calculate the realcompactness number of an arbitrary (that is, not necessarily crowded) countable space (see Corollary \ref{corollary_hereditarily_lindelof}). In Section~\ref{section_pseudocharacter}, we will give an analysis of the pseudocharacter of countable subsets of products of first-countable spaces, which will allow us to obtain the upper bound for the realcompactness number of a countable space (see Corollary \ref{corollary_upper_bound}). For this analysis, the dominating number $\dddd$ will be relevant, ultimately because of Hechler's discovery that $\Exp(\QQQ)=\dddd$ (see Corollary \ref{corollary_hechler}). In particular, when $\dddd\leq\kappa\leq\cccc$, it will follow that $\Exp(X)=\kappa$ for every countable dense subspace of $2^\kappa$ (see Corollary \ref{corollary_dense_subspace}).

\section{Preliminaries and terminology}\label{section_preliminaries}

Our reference for topological notions is \cite{engelking}, except regarding cardinal functions, for which we refer to \cite{juhasz}. Although our definition of realcompactness differs from the one given by Engelking, these two definitions are equivalent by \cite[Theorem 3.11.3]{engelking}. For a more general treatment of $\Exp$ and some history, we refer to the articles \cite[Section 8]{van_douwen} and \cite{mrowka}, from which this notation was taken. However, the terminology ``realcompactness number'' is new.

Given a space $X$, make the following definitions:
\begin{itemize}
\item $\kc(X)=\mini\{|\KK|:\KK\text{ is a cover of }X\text{ consisting of compact sets}\}$,
\item $\kcs(X)=\kc(\beta X\setminus X)$.
\end{itemize}
It is clear that $\kcs$ is related to several well-studied notions as follows:
\begin{itemize}
\item $\kcs(X)=0$ iff $X$ is compact,
\item $\kcs(X)=1$ iff $X$ is locally compact but not compact,
\item $\kcs(X)=\omega$ iff $X$ is \v{C}ech-complete but not locally compact.
\end{itemize}
The following simple but very convenient result shows that the value of $\kcs$ does not depend on the choice of compactification.
\begin{proposition}\label{proposition_kcs}
Let $X$ be a space, and let $\gamma X$ be a compactification of $X$. Then $\kcs(X)=\kc(\gamma X\setminus X)$.
\end{proposition}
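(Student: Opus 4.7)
The plan is to make essential use of the canonical continuous surjection $f\colon\beta X\to\gamma X$ that extends the identity on $X$; this map exists by the universal property of $\beta X$ since $\gamma X$ is compact Hausdorff. As a continuous map between compact Hausdorff spaces, $f$ is automatically closed (hence perfect), and the fact that $f\re X=\mathrm{id}_X$ yields the basic compatibilities: $f^{-1}(K)$ is compact in $\beta X$ and disjoint from $X$ whenever $K\subseteq\gamma X\setminus X$ is compact, and $f(L)$ is compact in $\gamma X$ whenever $L\subseteq\beta X$ is compact.

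For the inequality $\kcs(X)\leq\kc(\gamma X\setminus X)$, given a compact cover $\{K_\alpha:\alpha<\kappa\}$ of $\gamma X\setminus X$, the pullbacks $\{f^{-1}(K_\alpha):\alpha<\kappa\}$ are compact subsets of $\beta X$ disjoint from $X$, but they only cover $f^{-1}(\gamma X\setminus X)=\beta X\setminus f^{-1}(X)$. For the reverse inequality $\kc(\gamma X\setminus X)\leq\kcs(X)$, starting from a compact cover $\{L_\alpha:\alpha<\kappa\}$ of $\beta X\setminus X$, the pushforwards $\{f(L_\alpha):\alpha<\kappa\}$ are compact subsets of $\gamma X$ whose union contains $\gamma X\setminus X$, but each $f(L_\alpha)$ may hit $X$. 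The principal obstacle in both directions is the set $f^{-1}(X)\setminus X\subseteq\beta X\setminus X$, consisting of free $z$-ultrafilters on $X$ that converge in $\gamma X$ to a point of $X$: it is missing from the pullback cover, and it is where the pushforward cover falls back into $X$. This set is genuinely nontrivial in general; for example, when $X=\QQQ$ and $\gamma X=[0,1]$ every free ultrafilter on $\QQQ$ concentrating near a rational $q$ converges to $q$ in $[0,1]$.

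The cleanest way I see around this obstacle is to reformulate both sides through the dual characterization: $\kc(\gamma X\setminus X)\leq\kappa$ precisely when $X$ is the intersection of at most $\kappa$ open subsets of $\gamma X$ (take complements of compact subsets, which are closed in $\gamma X$ by Hausdorffness), and the analogous equivalence holds for $\beta X$. The proposition then becomes the assertion that the pseudocharacter of $X$ in a compactification is independent of the compactification chosen. The transport between compactifications uses the closedness of $f$: by upper-semicontinuity of fibres, for any open $U\subseteq\beta X$ the set $\{y\in\gamma X:f^{-1}(y)\subseteq U\}$ is open in $\gamma X$, and conversely pulling back any open $V\subseteq\gamma X$ gives an open $f^{-1}(V)\subseteq\beta X$ containing $X$. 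The hardest step is to verify that, after the transport, the intersection remains exactly $X$ rather than the possibly larger $f^{-1}(X)$; this is where the bookkeeping for the obstacle set $f^{-1}(X)\setminus X$ must be absorbed, presumably by enlarging the family of open sets without increasing its cardinality beyond $\kappa$.
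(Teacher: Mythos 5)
Your ``principal obstacle'' does not exist: for the canonical map $f\colon\beta X\longrightarrow\gamma X$ extending the identity, one always has $f^{-1}[X]=X$, i.e.\ the fibre over every point of $X$ is a singleton, so $f$ restricts to a (perfect) surjection $\beta X\setminus X\longrightarrow\gamma X\setminus X$. This is precisely the classical fact the paper invokes (Engelking, Theorem 3.5.7), and here is the short argument: if $p\in\beta X$, $x\in X$ and $p\neq x$, pick a continuous $g\colon\beta X\longrightarrow[0,1]$ with $g(p)=0$ and $g(x)=1$, and set $Z=\{y\in X:g(y)\leq 1/3\}$; since $X$ is dense in $\beta X$, $p$ lies in the closure of $Z$ in $\beta X$, hence $f(p)\in\cl_{\gamma X}(Z)$; but $Z$ is closed in $X$, so $\cl_{\gamma X}(Z)\cap X=Z\not\ni x$, whence $f(p)\neq x$. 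Your supporting example is therefore wrong: points of $\beta\QQQ$ are $z$-ultrafilters, not arbitrary ultrafilters on the set $\QQQ$, and a free $z$-ultrafilter cannot converge in $[0,1]$ to a rational $q$ --- if it did, it would contain every set $\{y\in\QQQ:|y-q|\leq\epsilon\}$ (being a $z$-ultrafilter), and then any member omitting $q$, being closed in $\QQQ$, would have empty intersection with a small enough such ball, contradicting the finite intersection property; so the filter would be fixed at $q$.

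Once this fact is in hand, the direct pullback/pushforward argument you sketched and then abandoned works verbatim and finishes the proof: $f\re(\beta X\setminus X)$ is a perfect surjection onto $\gamma X\setminus X$, so images of compact covers of $\beta X\setminus X$ are compact covers of $\gamma X\setminus X$ and preimages of compact covers of $\gamma X\setminus X$ are compact covers of $\beta X\setminus X$, giving both inequalities --- this is exactly the paper's proof. By contrast, the detour through the reformulation ``$X$ is an intersection of $\kappa$ open sets'' does not circumvent anything: in both directions the verification that the transported intersection equals $X$ (rather than $f^{-1}[X]$, or a proper subset of $X$ in the fibre-wise direction) is equivalent to the very fact $f^{-1}[X]=X$ that you are missing, and your proposal explicitly leaves that ``hardest step'' unproved. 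So as written the argument has a genuine gap, and the missing ingredient is the cited theorem about remainders under the canonical map.
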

\begin{proof}
Using \cite[Theorem 3.5.7 and Corollary 3.6.6]{engelking}, it is possible to obtain a perfect surjection $f:\beta X\setminus X\longrightarrow\gamma X\setminus X$. Since $f$ and $f^{-1}$ both preserve compactness (see \cite[Theorem 3.7.2]{engelking}), the desired result easily follows.
\end{proof}

A space is \emph{crowded} if it is non-empty and it has no isolated points. We will use $\RRR$ to denote the space of real numbers with its standard topology, and $\QQQ$ to denote the subspace of $\RRR$ consisting of the rational numbers. We will write $X\approx Y$ to mean that the spaces $X$ and $Y$ are homeomorphic. Given a space $X$, recall that the \emph{Cantor-Bendixson derivative} of $X$ is defined as follows for every ordinal $\xi$:
\begin{itemize}
\item $X^{(0)}=X$,
\item $X^{(\xi+1)}=X^{(\xi)}\setminus\{x\in X^{(\xi)}:x\text{ is isolated in }X^{(\xi)}\}$,
\item $X^{(\xi)}=\bigcap_{\xi'<\xi}X^{(\xi')}$, if $\xi$ is a limit ordinal.
\end{itemize}
Define the \emph{perfect kernel} of $X$ as $\kernel(X)=X^{(\xi)}$, where $\xi$ is the smallest ordinal such that $X^{(\xi)}=X^{(\xi+1)}$. A space is \emph{scattered} if $\kernel(X)=\varnothing$.

The \emph{weight} of a space $X$, denoted by $\weight(X)$, is the maximum between $\omega$ and the smallest cardinal $\kappa$ such that there exists a base for $X$ of size $\kappa$. Given a space $X$ and $x\in X$, define the \emph{character} of $X$ at $x$, denoted by $\chi(x,X)$, to be the maximum between $\omega$ and the smallest cardinal $\kappa$ such that there exists a local base for $X$ at $x$ of size $\kappa$. Given a space $X$ and $S\subseteq X$, define the \emph{pseudocharacter} of $X$ at $S$, denoted by $\psi(S,X)$, to be the maximum between $\omega$ and the smallest cardinal $\kappa$ such that there exists a collection $\OO$ of size $\kappa$ consisting of open subsets of $X$ such that $\bigcap\OO=S$. When $S=\{x\}$, we will simply speak of the pseudocharacter of $X$ at $x$, and denote it by $\psi(x,X)$.

Our reference for set-theoretic notions is \cite{kunen}. For convenience, we will assume that all partial orders\footnote{\,In \cite{kunen}, the discussion is carried out in the more general context of \emph{forcing posets} (that is, $p\leq q$ and $q\leq p$ need not imply $p=q$). Such a level of generality will not be needed here.} have a maximum, which we denote by $\one$. Assume that a partial order $\PPP$ is given. A subset $\CC$ of $\PPP$ is \emph{centered} if for every $F\in [\CC]^{<\omega}$ there exists $q\in\PPP$ such that $q\leq p$ for every $p\in F$. Recall that $\PPP$ is \emph{$\sigma$-centered} if there exist centered subsets $\CC_n$ of $\PPP$ for $n\in\omega$ such that $\PPP=\bigcup_{n\in\omega}\CC_n$. We will say that $\FF\subseteq\PPP$ is a \emph{filter} on $\PPP$ if the following conditions hold:
\begin{itemize}
\item $\one\in\FF$,
\item If $p\in\FF$ and $p\leq q\in\PPP$ then $q\in\FF$,
\item If $p,q\in\FF$ then there exists $r\in\FF$ such that $r\leq p$ and $r\leq q$.
\end{itemize}
An \emph{ultrafilter} is a filter that is maximal with respect to inclusion. It is easy to see that, when $\PPP=\BBB\setminus\{\zero\}$ for some boolean algebra $\BBB$, a filter $\FF$ on $\PPP$ is an ultrafilter iff for every $a\in\PPP$ either $a\in\FF$ of $a^c\in\FF$.

Assume that a countably infinite set $\Omega$ is given. A subset $\CC$ of $[\Omega]^\omega$ has the \emph{strong finite intersection property} (briefly, the $\SFIP$) if $C_0\cap\cdots\cap C_n$ is infinite whenever $n\in\omega$ and $C_0,\ldots,C_n\in\CC$. We will write $A\subseteq^\ast B$ to mean that $A\setminus B$ is finite. We will say that $A\subseteq\Omega$ is a \emph{pseudointersection} of $\CC\subseteq [\Omega]^\omega$ if $A$ is infinite and $A\subseteq^\ast C$ for all $C\in\Aa$. We will denote by $\pppp$ the \emph{pseudointersection number}, that is the minimum cardinality of a subset of $[\omega]^\omega$ with the $\SFIP$ and no pseudointersection. We will denote by $\dddd$ the \emph{dominating number}, that is the minimum cardinality of a subset $\DD$ of $\omega^\omega$ such that for every $f\in\omega^\omega$ there exists $g\in\DD$ such that $f(n)\leq g(n)$ for all but finitely many $n\in\omega$. We will say that $\Aa\subseteq [\Omega]^\omega$ is an \emph{independent family} on $\Omega$ if $A_0\cap\cdots\cap A_m\cap (\omega\setminus B_0)\cap\cdots\cap (\omega\setminus B_n)$ is infinite whenever $m,n\in\omega$ and $A_0,\ldots ,A_m,B_0,\ldots ,B_n\in\Aa$ are distinct. If the set $\Omega$ is not mentioned, we will assume that $\Omega=\omega$. The following result is credited to Nyikos by Matveev in \cite{matveev}.

\begin{proposition}[Nyikos]\label{proposition_nyikos}
There exists an independent family of size $\pppp$ with no pseudointersection.
\end{proposition}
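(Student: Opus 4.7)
The plan is to start with the family witnessing the definition of $\pppp$ and combine it with a large independent family via a product construction on $\omega\times\omega$, which I will then identify with $\omega$ through a bijection. To set things up, first fix a family $\FF=\{F_\alpha:\alpha<\pppp\}\subseteq[\omega]^\omega$ with the $\SFIP$ and no pseudointersection, which exists by definition of $\pppp$. Note that $\bigcap\FF$ must be finite (otherwise it would itself be a pseudointersection), so by replacing each $F_\alpha$ with $F_\alpha\setminus\bigcap\FF$, I may assume without loss of generality that $\bigcap\FF=\varnothing$. Next, using the classical Hausdorff/Fichtenholz--Kantorovich result that an independent family of size $\cccc$ exists on $\omega$ in ZFC, extract a subfamily $\{I_\alpha:\alpha<\pppp\}$ which remains independent.

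The construction is then simply $A_\alpha=F_\alpha\times I_\alpha\subseteq\omega\times\omega$ for $\alpha<\pppp$. Independence of $\{A_\alpha:\alpha<\pppp\}$ is checked directly: given distinct $\alpha_0,\ldots,\alpha_m,\beta_0,\ldots,\beta_n$, the set
\[
A_{\alpha_0}\cap\cdots\cap A_{\alpha_m}\cap\bigcap_{j\leq n}\bigl((\omega\times\omega)\setminus A_{\beta_j}\bigr)
\]
contains the rectangle $\bigl(\bigcap_{i\leq m}F_{\alpha_i}\bigr)\times\bigl(\bigcap_{i\leq m}I_{\alpha_i}\cap\bigcap_{j\leq n}(\omega\setminus I_{\beta_j})\bigr)$, whose first factor is infinite by the $\SFIP$ of $\FF$ and whose second factor is infinite by the independence of $\{I_\alpha\}$.

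The slightly more delicate verification is that $\{A_\alpha:\alpha<\pppp\}$ has no pseudointersection. Suppose $B\subseteq\omega\times\omega$ were infinite with $B\subseteq^\ast A_\alpha$ for every $\alpha<\pppp$. The key observation is that if $n\in\pi_1(B)\setminus F_\alpha$, then every point of $B$ above $n$ lies in $B\setminus A_\alpha$, so $\pi_1(B)\setminus F_\alpha$ is finite, i.e.\ $\pi_1(B)\subseteq^\ast F_\alpha$. If $\pi_1(B)$ is infinite this contradicts the choice of $\FF$. If $\pi_1(B)$ is finite, then some vertical fiber $\{n_0\}\times B'\subseteq B$ is infinite, and for any $\alpha$ with $n_0\notin F_\alpha$ this fiber would be disjoint from $A_\alpha$, forcing $B\setminus A_\alpha$ to be infinite, a contradiction. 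Hence $n_0\in\bigcap\FF=\varnothing$, which is impossible.

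The step I expect to require the most care is this last case analysis, since independence in itself is not sensitive to the presence of a common element; the arrangement $\bigcap\FF=\varnothing$ made at the outset is precisely what is needed to rule out a pseudointersection concentrated on a single column of $\omega\times\omega$. Everything else is routine.
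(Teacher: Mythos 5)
Your proof is correct and takes essentially the same route as the paper: both build the family on $\omega\times\omega$ as products of a set from an $\SFIP$ family with no pseudointersection with a set from an independent family, paired index by index. The only real difference is the device used to kill a pseudointersection concentrated on a single column: you normalize $\bigcap\FF=\varnothing$, whereas the paper intersects each product $A_\xi\times C_\xi$ with the upper triangle $\Delta^+=\{(m,n):m\leq n\}$, which serves the same purpose.
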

\begin{proof}
Clearly, it will be enough to construct such an independent family on $\omega\times\omega$. Fix an independent family $\Aa$ of size $\pppp$ (see \cite[Exercise III.1.35]{kunen}), and a subset $\CC$ of $[\omega]^\omega$ of size $\pppp$ with the $\SFIP$ and no pseudointersection. Let $\Aa=\{A_\xi:\xi<\pppp\}$ and $\CC=\{C_\xi:\xi<\pppp\}$ be injective enumerations. Set $\Delta^+=\{(m,n)\in\omega\times\omega:m\leq n\}$. It is straightforward to verify that $\{(A_\xi\times C_\xi)\cap\Delta^+:\xi<\pppp\}$ is an independent family on $\omega\times\omega$ of size $\pppp$ with no pseudointersection.
\end{proof}

\section{Calculating the realcompactness number}\label{section_calculating}

The aim of this section is to develop tools to calculate the realcompactness number. We will limit ourselves to the context of Lindel\"{o}f spaces, as the theory is particularly pleasant in this case. Furthermore, since the focus of this article is on countable spaces, this level of generality will clearly suffice. We also remark that Theorem \ref{theorem_Exp} generalizes \cite[Lemma 8.19]{van_douwen} from separable metrizable to Lindel\"{o}f. Corollary \ref{corollary_hechler} gives the simplest non-trivial application of these methods, and it will be a crucial ingredient for the results of Section \ref{section_pseudocharacter}. This theorem first appeared in \cite{hechler_1975}, where it is proved using a result from \cite{hechler_1973}.

\begin{lemma}\label{lemma_lindelof_function}
Let $Z$ be a space, let $X$ be a Lindel\"{o}f subspace of $Z$, and let $K\subseteq Z\setminus X$ be compact. Then there exists a continuous function $f:Z\longrightarrow [0,1]$ such that $f(z)=0$ for every $z\in K$ and $f(z)>0$ for every $z\in X$.
\end{lemma}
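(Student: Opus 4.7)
The plan is to use a standard cover-and-sum argument, exploiting Tychonoff-ness to separate points of $X$ from the compact set $K$, and then using Lindel\"ofness to reduce to countably many separating functions that can be safely summed.

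First, I would fix, for each $x\in X$, a continuous function $g_x:Z\longrightarrow [0,1]$ such that $g_x(x)=1$ and $g_x(z)=0$ for every $z\in K$. This is possible because $Z$ is Tychonoff and $K$ is compact (hence closed in $Z$), and $x\notin K$, so one can separate $x$ from $K$ by a continuous $[0,1]$-valued function. (Concretely, for each $k\in K$ pick $h_k$ with $h_k(x)=0$, $h_k(k)=1$; the open sets $\{h_k>1/2\}$ cover $K$, extract a finite subcover indexed by $k_0,\ldots,k_m$, take the maximum, and post-compose with a suitable piecewise linear map to obtain $g_x$.)

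Next, set $U_x=\{z\in Z:g_x(z)>0\}$. Then $U_x$ is open in $Z$, and $x\in U_x$, so $\{U_x\cap X:x\in X\}$ is an open cover of $X$. Since $X$ is Lindel\"of, there is a countable subcover; choose $\{x_n:n\in\omega\}\subseteq X$ such that $X\subseteq\bigcup_{n\in\omega}U_{x_n}$, and abbreviate $g_n=g_{x_n}$.

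Finally, define
\[
f(z)=\sum_{n\in\omega}2^{-n-1}g_n(z).
\]
The series converges uniformly on $Z$ because $0\leq g_n\leq 1$, so $f:Z\longrightarrow [0,1]$ is continuous. For $z\in K$ every $g_n(z)=0$, hence $f(z)=0$. For $z\in X$ there exists $n\in\omega$ with $z\in U_{x_n}$, so $g_n(z)>0$ and therefore $f(z)\geq 2^{-n-1}g_n(z)>0$.

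The only genuinely non-routine step is the initial separation of the single point $x$ from the compact set $K$ by a continuous $[0,1]$-valued function; once that standard Tychonoff-plus-compactness reduction is in place, Lindel\"ofness and the uniformly convergent weighted sum do the rest.
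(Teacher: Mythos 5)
Your proof is correct and follows essentially the same route as the paper: separate each point of $X$ from $K$ by a continuous $[0,1]$-valued function, use Lindel\"ofness to pass to countably many positivity neighborhoods covering $X$, and take the uniformly convergent weighted sum $\sum_n 2^{-n-1}g_n$. The extra detail you supply on why the separating functions exist is fine but not needed beyond the direct Tychonoff separation of $x$ from the closed set $K$.
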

\begin{proof}
For every $x\in X$, fix a continuous function $f_x:Z\longrightarrow [0,1]$ such that $f_x(z)=0$ for every $z\in K$ and $f_x(x)=1$. By continuity, for every $x\in X$ there exists an open neighborhood $U_x$ of $x$ in $Z$ such that $f_x(z)>0$ for every $z\in U_x$. Since $X$ is Lindel\"{o}f, there exist $x_n\in X$ for $n\in\omega$ that $X\subseteq\bigcup_{n\in\omega}U_{x_n}$. Finally, define $f:Z\longrightarrow [0,1]$ by setting
$$
f(z)=\sum_{n\in\omega}\frac{f_{x_n}(z)}{2^{n+1}}
$$
for $z\in Z$. It is straightforward to check that $f$ is as desired.
\end{proof}

\begin{theorem}\label{theorem_Exp}
Let $X$ be a Lindel\"{o}f space. Then
$$
\Exp(X)=\maxi\{\weight(X),\kcs(X)\}.
$$
\end{theorem}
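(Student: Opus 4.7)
The plan is to prove the two inequalities separately. For the lower bound $\Exp(X)\geq\maxi\{\weight(X),\kcs(X)\}$, suppose $X$ is homeomorphic to a closed subspace of $\RRR^\kappa$ for some infinite $\kappa$. The inequality $\weight(X)\leq\kappa$ is immediate from $\weight(\RRR^\kappa)=\kappa$ and the hereditariness of weight. For the $\kcs$ bound I would view $\RRR^\kappa$ as a subspace of the compact product $[-\infty,+\infty]^\kappa$ and let $\gamma X$ be the closure of $X$ there, which is a compactification of $X$. Then $\gamma X\setminus X\subseteq\bigcup_{\alpha<\kappa}K_\alpha$, where $K_\alpha=\gamma X\cap\{y:y_\alpha\in\{-\infty,+\infty\}\}$ is compact, so Proposition~\ref{proposition_kcs} gives $\kcs(X)\leq\kappa$.

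For the upper bound, set $\kappa=\maxi\{\weight(X),\kcs(X)\}$. Since $\weight(X)\leq\kappa$, I would fix an embedding $e:X\to[0,1]^\kappa$ and let $\gamma X=\overline{e(X)}$, a compactification of $X$ of weight at most $\kappa$. By Proposition~\ref{proposition_kcs} I may cover $\gamma X\setminus X$ by compact sets $\{K_\alpha:\alpha<\kappa\}$, and Lemma~\ref{lemma_lindelof_function} (whose hypothesis is that $X$ is Lindel\"{o}f) produces continuous functions $f_\alpha:\gamma X\to[0,1]$ with $f_\alpha\equiv 0$ on $K_\alpha$ and $f_\alpha>0$ on $X$. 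Define $g:X\to[0,1]^\kappa\times\RRR^\kappa$ by
\[
g(x)=\bigl(e(x),\,(1/f_\alpha(x))_{\alpha<\kappa}\bigr).
\]
This is a continuous injection, and in fact an embedding because already its first coordinate is one. Since $\kappa$ is infinite, $[0,1]^\kappa\times\RRR^\kappa$ embeds as a closed subspace of $\RRR^\kappa\times\RRR^\kappa\approx\RRR^\kappa$, so it suffices to show that $g(X)$ is closed in $[0,1]^\kappa\times\RRR^\kappa$.

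This closedness is the main step. I would argue as follows: suppose $(y,z)\in\overline{g(X)}$, and pick a net $(x_i)$ in $X$ with $g(x_i)\to(y,z)$. Since $e(\gamma X)$ is compact, hence closed in $[0,1]^\kappa$, the limit $y$ of $e(x_i)$ equals $e(x')$ for a unique $x'\in\gamma X$. If $x'\in\gamma X\setminus X$, pick $\alpha$ with $x'\in K_\alpha$; by continuity $f_\alpha(x_i)\to 0$, so $1/f_\alpha(x_i)\to\infty$, contradicting the fact that the $\alpha$-th real coordinate of $g(x_i)$ converges to $z_\alpha\in\RRR$. Hence $x'\in X$ and $(y,z)=g(x')$ by continuity. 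The main obstacle is precisely this closedness check; the weight estimates and the embedding $[0,1]^\kappa\times\RRR^\kappa\hookrightarrow\RRR^\kappa$ are routine, and the convention that $\Exp(X)$ is infinite absorbs the compact and locally compact edge cases.
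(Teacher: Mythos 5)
Your proposal is correct and takes essentially the same approach as the paper: the lower bound is the paper's argument transposed from $(0,1)^\kappa\subseteq[0,1]^\kappa$ to $\RRR^\kappa\subseteq[-\infty,+\infty]^\kappa$, and the upper bound uses the same two ingredients, Proposition \ref{proposition_kcs} and Lemma \ref{lemma_lindelof_function}. The only cosmetic difference is the final closedness check: you pass to reciprocals $1/f_\alpha$ and argue with nets, whereas the paper keeps the functions $f_\alpha$ themselves as coordinates of an embedding of all of $\gamma X$ and gets closedness for free from $\phi[X]=\phi[\gamma X]\cap\big((0,\infty)^{\FF}\times\RRR^{\GG}\big)$ with $\phi[\gamma X]$ compact.
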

\begin{proof}
Set $\kappa=\Exp(X)$ and $\kappa'=\maxi\{\weight(X),\kcs(X)\}$. We begin by showing that $\kappa\leq\kappa'$. By \cite[Theorem 3.5.2]{engelking}, we can fix a compactification $\gamma X$ of $X$ such that $\weight(\gamma X)=\weight(X)$. By Proposition \ref{proposition_kcs}, there exist compact subsets $K_\xi$ of $\gamma X\setminus X$ for $\xi\in\kappa'$ such that $\bigcup_{\xi\in\kappa'}K_\xi=\gamma X\setminus X$. By Lemma \ref{lemma_lindelof_function}, there exist continuous functions $f_\xi:\gamma X\longrightarrow [0,1]$ for $\xi\in\kappa'$ such that $f_\xi(z)=0$ for every $z\in K_\xi$ and $f_\xi(z)>0$ for every $z\in X$. Set $\FF=\{f_\xi:\xi\in\kappa'\}$. Also fix a collection $\GG$ of size at most $\weight(\gamma X)=\weight(X)$ consisting of continuous functions $g:\gamma X\longrightarrow\RRR$ that separates points of $\gamma X$. In other words, whenever $x,y\in\gamma X$ are distinct, there exists $g\in\GG$ such that $g(x)\neq g(y)$. Define $\phi:\gamma X\longrightarrow\RRR^{\FF\cup\GG}$ by setting $\phi(z)(f)=f(z)$ for $z\in\gamma X$ and $f\in\FF\cup\GG$. For notational convenience, we will identify $\RRR^{\FF\cup\GG}$ with $\RRR^\FF\times\RRR^\GG$ in the obvious way.

Notice that $\phi$ is continuous because each one of its coordinates is continuous. Furthermore, our choice of $\GG$ will guarantee that $\phi$ is injective, hence $\phi$ is an embedding by compactness. Therefore
$$
X\approx\phi[X]=\phi[\gamma X]\cap\left((0,\infty)^\FF\times\RRR^\GG\right).
$$
Since $(0,\infty)^\FF\times\RRR^\GG\approx\RRR^{\FF\cup\GG}$ and $\phi[\gamma X]$ is compact, it follows that $\phi[X]\approx X$ is homeomorphic to a closed subspace of $\RRR^{\FF\cup\GG}$. But $|\FF\cup\GG|\leq\kappa'$ by construction, hence $\kappa\leq\kappa'$.

To finish the proof, we will show that $\kappa'\leq\kappa$. Assume without loss of generality that $X$ is a closed subspace of $(0,1)^\kappa$, and let $Z=\cl(X)$, where the closure is taken in $[0,1]^\kappa$. Observe that $Z$ is a compactification of $X$. Denote by $\pi_\xi:[0,1]^\kappa\longrightarrow [0,1]$ for $\xi\in\kappa$ the natural projection on the $\xi$-th coordinate. Since $X$ is closed in $(0,1)^\kappa$, for every $z\in Z\setminus X$ there exists $\xi\in\kappa$ such that $z(\xi)\in\{0,1\}$. In other words, 
$$
Z\setminus X=\bigcup_{\xi\in\kappa}(\pi_\xi^{-1}[\{0,1\}]\cap Z).
$$
But each $\pi_\xi^{-1}[\{0,1\}]\cap Z$ is compact, hence $\kcs(X)\leq\kappa$ by Proposition \ref{proposition_kcs}. Since clearly $\weight(X)\leq\weight\big((0,1)^\kappa\big)=\kappa$, it follows that $\kappa'\leq\kappa$, as desired.
\end{proof}

\begin{corollary}[Hechler]\label{corollary_hechler}
$\Exp(\QQQ)=\dddd$.
\end{corollary}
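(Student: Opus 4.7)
The plan is to apply Theorem \ref{theorem_Exp}: since $\QQQ$ is countable and therefore Lindel\"{o}f, we have $\Exp(\QQQ)=\maxi\{\weight(\QQQ),\kcs(\QQQ)\}$. Because $\QQQ$ is second-countable, $\weight(\QQQ)=\omega\leq\dddd$, so the task reduces to showing $\kcs(\QQQ)=\dddd$.

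To compute $\kcs(\QQQ)$, I would exploit Proposition \ref{proposition_kcs}, which allows me to choose any convenient compactification. I would use $[0,1]$: note that $\QQQ\approx\QQQ\cap[0,1]$ (say by the classical Sierpi\'{n}ski characterization of $\QQQ$ as the unique, up to homeomorphism, countable metrizable crowded space), and this embedding is dense, so $[0,1]$ is a compactification of a homeomorphic copy of $\QQQ$. The remainder is the set of irrationals in $[0,1]$, which is homeomorphic to $\omega^\omega$. Hence $\kcs(\QQQ)=\kc(\omega^\omega)$.

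It then remains to verify the well-known identity $\kc(\omega^\omega)=\dddd$. For $\kc(\omega^\omega)\leq\dddd$, fix a dominating family $\{f_\xi:\xi<\dddd\}\subseteq\omega^\omega$; for each $\xi<\dddd$ and $m\in\omega$, consider the compact ``box''
$$
K_{\xi,m}=\{h\in\omega^\omega:h(n)\leq f_\xi(n)\text{ for all }n\geq m\text{ and }h(n)\leq m\text{ for all }n<m\},
$$
which is compact by Tychonoff's theorem. Every $g\in\omega^\omega$ is eventually dominated by some $f_\xi$, and then lies in $K_{\xi,m}$ for sufficiently large $m$, so these $\dddd$-many compact sets cover $\omega^\omega$. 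For $\dddd\leq\kc(\omega^\omega)$, observe that every compact $K\subseteq\omega^\omega$ is bounded: the function $f_K(n)=\maxi\{h(n):h\in K\}$ is well-defined in $\omega^\omega$ because the projection of $K$ onto the $n$-th coordinate is compact, hence finite. Thus any cover of $\omega^\omega$ by compact sets yields, via $K\mapsto f_K$, a dominating family (in fact, everywhere dominating) of the same cardinality.

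I do not foresee a real obstacle here: the argument is essentially a sequence of well-known reductions. The only points that require a moment of care are the homeomorphism $\QQQ\approx\QQQ\cap[0,1]$, invoked to produce a metric compactification of $\QQQ$, and the fact that eventual domination by $\dddd$-many functions can be converted to everywhere domination by an equinumerous family using the countably many ``shifts'' $K_{\xi,m}$ above.
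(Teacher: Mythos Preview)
Your proposal is correct and follows essentially the same approach as the paper: apply Theorem~\ref{theorem_Exp}, use Proposition~\ref{proposition_kcs} to pass to a convenient metrizable compactification whose remainder is homeomorphic to $\omega^\omega$, and then invoke $\kc(\omega^\omega)=\dddd$. The only cosmetic differences are that the paper uses $2^\omega$ (via \cite[Exercise~6.2.A.(e)]{engelking}) rather than $[0,1]$ as the compactification, and cites \cite[Theorem~8.2]{van_douwen} for $\kc(\omega^\omega)=\dddd$ instead of writing out the argument.
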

\begin{proof}
By \cite[Exercise 6.2.A.(e)]{engelking}, it is possible to fix a compactification $\gamma\QQQ$ of $\QQQ$ such that $\gamma\QQQ\approx 2^\omega$. It follows from \cite[Theorem 4.3.23 and Exercise 6.2.A.(b)]{engelking} that $\gamma\QQQ\setminus\QQQ\approx\omega^\omega$. Since $\kc(\omega^\omega)=\dddd$ (see \cite[Theorem 8.2]{van_douwen}), this shows that $\kcs(\QQQ)=\dddd$ by Proposition \ref{proposition_kcs}. An application of Theorem \ref{theorem_Exp} concludes the proof.
\end{proof}

We conclude this section with a result which will enable us to find a bound on the realcompactness number by finding a suitable finite decomposition of the space in question. It will be used in the proofs of Theorem \ref{theorem_example_general} and Corollary \ref{corollary_hereditarily_lindelof}.

\begin{proposition}\label{proposition_finite_decomposition}
Let $X$ be a Lindel\"{o}f space. Assume that $n\in\omega$ and $X_0,\ldots,X_n$ are Lindel\"{o}f subspaces of $X$ such that $X=X_0\cup\cdots\cup X_n$. Then
$$
\Exp(X)\leq\maxi\{\Exp(X_0),\ldots,\Exp(X_n),\weight(X)\}.
$$
\end{proposition}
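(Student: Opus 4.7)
The plan is to apply Theorem \ref{theorem_Exp} to reduce the claim to showing $\kcs(X)\leq\kappa$, where $\kappa=\maxi\{\Exp(X_0),\ldots,\Exp(X_n),\weight(X)\}$. First I would prove the two-piece case $X=X_0\cup X_1$ directly. The general case then follows by induction on $n$: writing $X=X_0\cup W$ with $W=X_1\cup\cdots\cup X_n$ (a Lindel\"{o}f space as a finite union of Lindel\"{o}f subspaces), the inductive hypothesis gives $\Exp(W)\leq\kappa$, and the two-piece case applied to $X=X_0\cup W$ concludes.

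For the two-piece case, fix a compactification $\gamma X$ of $X$ with $\weight(\gamma X)\leq\kappa$, and set $Y_i=\cl_{\gamma X}(X_i)$ for $i\in\{0,1\}$. Each $Y_i$ is a compactification of $X_i$, so by Proposition \ref{proposition_kcs} there are compact sets $\{K^i_\xi:\xi<\kappa\}$ in $\gamma X$ covering $Y_i\setminus X_i$. Since closure commutes with finite unions, $\gamma X=Y_0\cup Y_1$, and hence $\gamma X\setminus X=(Y_0\setminus X)\cup(Y_1\setminus X)$.

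The main obstacle is that each $K^i_\xi$ may intersect $X_{1-i}$, so it need not be a subset of $\gamma X\setminus X$. To trim it, set $S=K^i_\xi\cap X_{1-i}$ (closed in $X_{1-i}$ and hence Lindel\"{o}f) and $L=\cl_{K^i_\xi}(S)$, a compactification of $S$. Since $K^i_\xi\cap X_i=\varnothing$, one has $K^i_\xi\cap(\gamma X\setminus X)=K^i_\xi\setminus S=(K^i_\xi\setminus L)\cup(L\setminus S)$. Because $\Exp(X_{1-i})\leq\kappa$ exhibits $X_{1-i}$ as a closed subspace of $\RRR^\kappa$, its closed subspace $S$ also embeds as a closed subspace of $\RRR^\kappa$, yielding $\Exp(S)\leq\kappa$ and hence $\kcs(S)\leq\kappa$ by Theorem \ref{theorem_Exp}. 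Proposition \ref{proposition_kcs} then covers $L\setminus S$ by at most $\kappa$ compact subsets of $L\setminus S\subseteq\gamma X\setminus X$. For $K^i_\xi\setminus L$: this is open in the compact Hausdorff space $K^i_\xi$ and has weight $\leq\kappa$; regularity provides, for each of its points, an open neighborhood with compact closure in $K^i_\xi\setminus L$, and since Lindel\"{o}f degree is bounded by weight we can extract a subcover of size $\leq\kappa$, yielding $\leq\kappa$ compact subsets of $K^i_\xi\setminus L\subseteq\gamma X\setminus X$.

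Taking the union over $i\in\{0,1\}$ and $\xi<\kappa$ produces a cover of $\gamma X\setminus X$ by at most $\kappa$ compact subsets of $\gamma X\setminus X$. This gives $\kcs(X)\leq\kappa$ by Proposition \ref{proposition_kcs}, and Theorem \ref{theorem_Exp} concludes. The trickiest point will be the trimming step: the naive attempt to cover $K^i_\xi\setminus X$ directly by compacts in $\gamma X\setminus X$ is blocked by the possibility that $K^i_\xi$ accumulates on $X_{1-i}$, and the auxiliary compactification $L$ is precisely what separates the ``Lindel\"{o}f-remainder'' part $L\setminus S$ from the ``open locally compact'' part $K^i_\xi\setminus L$.
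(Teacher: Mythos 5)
Your argument is correct, but it takes a genuinely different route from the paper. You reduce to the two-piece case by induction and then repair each compact piece $K^i_\xi$ of $\cl(X_i)\setminus X_i$ individually: the part accumulating on $X_{1-i}$ is handled by observing that $S=K^i_\xi\cap X_{1-i}$ is a closed (hence Lindel\"{o}f) subspace of $X_{1-i}$ with $\Exp(S)\leq\kappa$ (using that a closed subspace of a closed subspace of $\RRR^\kappa$ is again closed in $\RRR^\kappa$), so that $\kcs(S)\leq\kappa$ covers $L\setminus S$, while the leftover open part $K^i_\xi\setminus L$ is locally compact of weight at most $\kappa$ and hence a union of at most $\kappa$ compacta; all the resulting compacta do lie in $\gamma X\setminus X$, so $\kcs(X)\leq\kappa$ and Theorem \ref{theorem_Exp} finishes, exactly as you say. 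The paper instead treats all $n+1$ pieces simultaneously and without induction: for each $i$ it covers the whole set $\gamma X\setminus X_i$ (not just $\cl(X_i)\setminus X_i$) by at most $\kappa$ compact sets, the extra sets being complements $\gamma X\setminus U_{i,\xi}$ coming from the pseudocharacter bound $\psi(\cl(X_i),\gamma X)\leq\weight(\gamma X)\leq\kappa$ for the compact set $\cl(X_i)$; then the intersections $K_0\cap\cdots\cap K_n$ with $K_i\in\KK_i$ are automatically compact subsets of $\bigcap_i(\gamma X\setminus X_i)=\gamma X\setminus X$ and there are only $\kappa$ of them. The paper's intersection trick is shorter and needs no monotonicity of $\Exp$ or local-compactness argument; your version buys a more hands-on, piecewise picture (and isolates the two-piece case), at the cost of the induction and the auxiliary compactification $L$. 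Both are complete proofs, modulo the trivial degenerate cases ($X_i$ or $S$ empty or compact), which cause no trouble in either approach.
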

\begin{proof}
Set $\kappa=\maxi\{\Exp(X_0),\ldots,\Exp(X_n),\weight(X)\}$. By \cite[Theorem 3.5.2]{engelking}, we can fix a compactification $\gamma X$ of $X$ such that $\weight(\gamma X)=\weight(X)$. Throughout this proof, we will use $\cl$ to denote closure in $\gamma X$.

Since each $X_i$ is Lindel\"{o}f, by Theorem \ref{theorem_Exp} it is possible to find compact sets $K_{i,\xi}$ for $0\leq i\leq n$ and $\xi\in\kappa$ such that
$$
\cl(X_i)\setminus X_i=\bigcup_{\xi\in\kappa}K_{i,\xi}
$$
for each $i$. On the other hand, using the compactness of each $\cl(X_i)$ and the fact that $\weight(\gamma X)\leq\kappa$, it is easy to see that each $\psi(\cl(X_i),\gamma X)\leq\kappa$. This means that there exist open subsets $U_{i,\xi}$ of $\gamma X_i$ for $0\leq i\leq n$ and $\xi\in\kappa$ such that 
$$
\gamma X\setminus\cl(X_i)=\bigcup_{\xi\in\kappa}(\gamma X\setminus U_{i,\xi})
$$
for each $i$. In conclusion, set
$$
\KK_i=\{K_{i,\xi}:\xi\in\kappa\}\cup\{\gamma X\setminus U_{i,\xi}:\xi\in\kappa\}
$$
for $0\leq i\leq n$, and observe that each $\KK_i$ is a cover of $\gamma X\setminus X_i$ of size at most $\kappa$ consisting of compact sets. Therefore, the sets of the form $K_0\cap\cdots\cap K_n$, where each $K_i\in\KK_i$, will cover $\gamma X\setminus X$. Since there are at most $\kappa$ sets of this form, and they are clearly compact, a further application of Theorem \ref{theorem_Exp} will conclude the proof.
\end{proof}

\section{The lower bound}\label{section_lower}

In this section, we will give a lower bound on $\Exp(X)$ for a countable crowded space $X$ (see Corollary \ref{corollary_lower_bound}). Chronologically, this is the first result that we obtained, and it motivated us to look for the examples constructed in the later sections.

\begin{theorem}\label{theorem_lower_bound_omega}
Let $\kappa<\pppp$ be an infinite cardinal, and let $X$ be a countable crowded subspace of $\omega^\kappa$. Then $X$ is not closed in $\omega^\kappa$.
\end{theorem}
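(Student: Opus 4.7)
The plan is to assume for contradiction that $X$ is closed in $\omega^\kappa$ and construct a point $y \in \omega^\kappa \setminus X$ lying in $\cl(X)$. Fix an injective enumeration $X = \{x_n : n \in \omega\}$, and define a partial order $\PPP$ whose conditions are the finite partial functions $p : \kappa \to \omega$ such that $p = x_m \re \dom(p)$ for some $m \in \omega$, ordered by reverse inclusion. Intuitively, each $p \in \PPP$ records that the basic open set in $\omega^\kappa$ determined by $p$ meets $X$, so any sufficiently generic filter will produce a point of $\cl(X)$.

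For each $m \in \omega$, set $\PPP_m = \{p \in \PPP : p = x_m \re \dom(p)\}$; any finite subfamily of $\PPP_m$ has union $x_m$ restricted to the union of their domains, which is again in $\PPP_m$, so $\PPP$ is $\sigma$-centered. For each $\xi < \kappa$, the set $D_\xi = \{p \in \PPP : \xi \in \dom(p)\}$ is dense, since any $p \in \PPP_m$ extends to $p \cup \{(\xi, x_m(\xi))\} \in \PPP_m$. For each $n \in \omega$, the set $E_n = \{p \in \PPP : p(\xi) \neq x_n(\xi) \text{ for some } \xi \in \dom(p)\}$ is dense: fix $p = x_m \re \dom(p)$ not already in $E_n$, so that $x_m \re \dom(p) = x_n \re \dom(p)$. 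If $m \neq n$, then $x_m$ and $x_n$ differ at some $\xi$ and we extend $p$ by $(\xi, x_m(\xi))$; if $m = n$, then crowdedness of $X$ at $x_n$ supplies some $x_k \neq x_n$ with $x_k \re \dom(p) = x_n \re \dom(p)$, after which we extend $p$ using $x_k$ at a coordinate where $x_k$ and $x_n$ differ.

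Since $\PPP$ is $\sigma$-centered and $|\kappa| + \omega < \pppp$, Bell's theorem yields a filter $G \subseteq \PPP$ meeting every $D_\xi$ and every $E_n$. Setting $y = \bigcup G$, one verifies that $y$ is a total function $\kappa \to \omega$ from the $D_\xi$, that $y \neq x_n$ for every $n$ from the $E_n$, and that for any finite $F \subseteq \kappa$ the directedness of $G$ produces a single $p \in G$ with $F \subseteq \dom(p)$, so that $y \re F = p \re F = x_m \re F$ for the witnessing $m$; hence $y \in \cl(X) \setminus X$, contradicting closedness. The only genuinely delicate step is the density of $E_n$ when $m = n$, which is exactly where crowdedness of $X$ enters and without which the result would fail (witness any closed discrete copy of $\omega$ in $\omega^\kappa$).
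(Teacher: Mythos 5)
Your proof is correct and follows essentially the same route as the paper: the same $\sigma$-centered poset of finite restrictions of points of $X$, the same two families of dense sets (the paper uses $D_a$ for finite $a\subseteq\kappa$ where you use single coordinates $D_\xi$ plus directedness of the filter, a cosmetic difference), and the same appeal to Bell's theorem to produce a point of $\cl(X)\setminus X$.
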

\begin{proof}
Define
$$
\PPP=\{x\re a:x\in X\text{ and }a\in [\kappa]^{<\omega}\}.
$$
Given $s,t\in\PPP$, declare $s\leq t$ if $s\supseteq t$, and observe that $\PPP$ is a partial order. Also notice that $\PPP$ is $\sigma$-centered because
$$
\PPP=\bigcup_{x\in X}\{x\re a:a\in [\kappa]^{<\omega}\}.
$$

Given $x\in X$ and $a\in [\kappa]^{<\omega}$, define
\begin{itemize}
\item $D_x=\{s\in\PPP:s(\xi)\neq x(\xi)\text{ for some }\xi\in\dom(s)\}$,
\item $D_a=\{s\in\PPP:s=x\re b\text{ for some }x\in X\text{ and }b\in [\kappa]^{<\omega}\text{ such that }b\supseteq a\}$.
\end{itemize}
Using the fact that $X$ is crowded, one sees that each $D_x$ is dense in $\PPP$. On the other hand, it is trivial to check that each $D_a$ is dense in $\PPP$. Since $\kappa<\pppp$, Bell's Theorem (see \cite[Theorem III.3.61]{kunen})\footnote{\,This result was essentially obtained in \cite{bell}. However, the modern statement given in \cite{kunen} is the one that is needed here.} guarantees the existence of a filter $G$ on $\PPP$ that meets all of these dense sets. It is easy to realize that $\bigcup G\in\cl(X)\setminus X$, where $\cl$ denotes closure in $\omega^\kappa$.
\end{proof}

\begin{corollary}\label{corollary_lower_bound_polish}
Let $\kappa<\pppp$ be an infinite cardinal, and let $Z_\xi$ for $\xi\in\kappa$ be Polish spaces. Set $Z=\prod_{\xi\in\kappa}Z_\xi$. If $X$ is a countable crowded subspace of $Z$ then $X$ is not closed in $Z$.
\end{corollary}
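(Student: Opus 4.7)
The plan is to adapt the forcing argument of Theorem \ref{theorem_lower_bound_omega}, replacing the role of fixed coordinate values in discrete $\omega$ with basic open neighborhoods in each Polish factor. First I would fix a complete metric $d_\xi$ on each $Z_\xi$ and a countable base $\BB_\xi$ of $Z_\xi$, and call a set $U\subseteq Z$ \emph{basic open} if $U=\prod_{\xi\in a}U_\xi\times\prod_{\xi\notin a}Z_\xi$ for some $a\in[\kappa]^{<\omega}$ and some $U_\xi\in\BB_\xi$; I write $\dom(U)=a$.

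Next, consider the partial order
\[
\PPP=\{(x,U):x\in X,\ U\text{ is basic open},\ x\in U\}
\]
with $(x',U')\leq(x,U)$ iff $U'\subseteq U$. It is $\sigma$-centered via $\PPP=\bigcup_{x\in X}\CC_x$, where $\CC_x=\{(x,U)\in\PPP\}$: any finite intersection of basic open neighborhoods of $x$ contains another basic open neighborhood of $x$. I would then introduce two families of dense sets. For each $x\in X$, let $D_x=\{(y,U)\in\PPP:x\notin\cl(U)\}$. For each $\xi\in\kappa$ and $m\in\omega$, let $F_{\xi,m}$ consist of those $(y,U)\in\PPP$ such that $\xi\in\dom(U)$ and $\pi_\xi(U)$ has $d_\xi$-diameter less than $1/m$. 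Density of $D_x$ combines the crowdedness of $X$ (used, when $y=x$, to replace $y$ by some point of $X\cap V\setminus\{x\}$) with the regularity of $Z$ (used to then shrink to a basic open whose closure misses $x$); density of $F_{\xi,m}$ is immediate. There are $\omega+\kappa\cdot\omega=\kappa<\pppp$ dense sets, so Bell's theorem produces a filter $G$ on $\PPP$ that meets all of them.

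From $G$ I would build the desired point $p=(p_\xi)_{\xi\in\kappa}\in Z$: for each $\xi$, the family $\{\pi_\xi(U):(y,U)\in G,\ \xi\in\dom(U)\}$ is a filter base of nonempty open subsets of $Z_\xi$ whose $d_\xi$-diameters shrink to zero, so by completeness of $d_\xi$ it has a unique limit point $p_\xi\in\bigcap_{U}\cl(\pi_\xi(U))$. To verify that $p$ lies in the closure of $X$ in $Z$, I would take an arbitrary basic open $V$ around $p$: for each $\xi\in\dom(V)$, the shrinking diameters provide some $(y_\xi,U_\xi)\in G$ with $\pi_\xi(U_\xi)\subseteq\pi_\xi(V)$, and a common refinement $(y,U)\in G$ of these finitely many conditions then satisfies $y\in U\cap X\subseteq V\cap X$. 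To verify $p\notin X$, for each $x\in X$ the filter $G$ meets $D_x$, so some $(y,U)\in G$ has $x\notin\cl(U)$; but $p\in\cl(U)$ by construction, so $p\neq x$.

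The main subtlety, and where I expect the argument to require care, is guaranteeing that the ``generic'' point $p$ simultaneously belongs to the closure of $X$ at all $\kappa$ coordinates while being distinct from every element of $X$. The crucial design choice is to require $x\notin\cl(U)$ (rather than the weaker $x\notin U$) in the definition of $D_x$, because $p$ is only guaranteed to lie in $\cl(U)$, not in $U$ itself, for the conditions drawn from $G$; separation from $x$ must therefore occur at the closure level, which is precisely where the regularity of $Z$ is exploited, while the completeness of each $d_\xi$ is what manufactures $p$ in the first place and explains why the Polish hypothesis is the right generalization of discrete $\omega$.
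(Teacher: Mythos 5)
Your argument is correct, but it proves the corollary by a genuinely different route than the paper. The paper treats this statement as a pure reduction to Theorem \ref{theorem_lower_bound_omega}: assuming $X$ closed, it may assume each $Z_\xi$ is crowded, finds a zero-dimensional dense $\mathsf{G}_\delta$ subspace $Z'_\xi\subseteq Z_\xi$ containing $\pi_\xi[X]$, deletes a countable dense set to kill compact open subsets so that $Z'_\xi\approx\omega^\omega$ by the Alexandrov--Urysohn characterization, and then observes that $X$ is closed in $\prod_\xi Z'_\xi\approx\omega^\kappa$, contradicting the theorem. You instead rerun the forcing argument directly in the Polish product: the same $\sigma$-centered-plus-Bell mechanism, but with conditions carrying basic open boxes, with separation from each $x\in X$ enforced at the level of closures (via regularity), and with the generic point manufactured from shrinking-diameter projections via completeness --- exactly the right replacements for the role that ``finitely many frozen coordinates'' plays in $\omega^\kappa$, and your verifications (density of $D_x$ using crowdedness, density of $F_{\xi,m}$, $p\in\cl(U)$ for all conditions in $G$, hence $p\in\cl(X)\setminus X$) all go through. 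What your version buys is self-containment: it avoids the cited facts about zero-dimensional dense $\mathsf{G}_\delta$ subspaces and the characterization of $\omega^\omega$, and it visibly isolates where completeness and regularity enter; what the paper's version buys is brevity, since all combinatorics stay confined to the discrete case already proved. One cosmetic point: your $\PPP$ is only a preorder (antisymmetry fails when two points of $X$ share a basic box), which is harmless for Bell's theorem, but if you want a literal partial order as in the paper's conventions you can simply drop the first coordinate and force with the basic open boxes meeting $X$, ordered by inclusion; $\sigma$-centeredness persists via $\CC_x=\{U:x\in U\}$ and all your density arguments are unchanged.
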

\begin{proof}
Let $X$ be a countable crowded subspace of $Z$. Assume, in order to get a contradiction, that $X$ is closed in $Z$. Notice that the existence of $X$ implies that $|Z_\xi|\geq 2$ for infinitely many values of $\xi$. Therefore, by taking suitable countably infinite products, we can assume without loss of generality, that each $Z_\xi$ is crowded. Given $\xi\in\kappa$, denote by $\pi_\xi:Z\longrightarrow Z_\xi$ the natural projection on the $\xi$-th coordinate. By \cite[Exercise 7.4.17]{engelking}, it is possible to obtain a zero-dimensional dense $\mathsf{G}_\delta$ subspace $Z'_\xi$ of $Z_\xi$ for each $\xi$ such that each $\pi_\xi[X]\subseteq Z'_\xi$. By removing a countable dense subset of $Z'_\xi$ disjoint from $\pi_\xi[X]$, we can assume that each $Z'_\xi$ has no non-empty compact open subsets. It follows from \cite[Theorem 4.3.23 and Exercise 6.2.A.(b)]{engelking} that $Z'_\xi\approx\omega^\omega$ for each $\xi$. Finally, the fact that $X$ is a closed subset of
$$
\prod_{\xi\in\kappa}Z'_\xi\approx (\omega^\omega)^\kappa\approx\omega^\kappa
$$
contradicts Theorem \ref{theorem_lower_bound_omega}.
\end{proof}

\begin{corollary}\label{corollary_lower_bound}
If $X$ is a countable crowded space then $\Exp(X)\geq\pppp$.
\end{corollary}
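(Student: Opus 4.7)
The plan is to derive the corollary as an immediate consequence of Corollary \ref{corollary_lower_bound_polish}, by contradiction. Assume $X$ is a countable crowded space and that $\Exp(X)=\kappa<\pppp$. First I would note that this assumption makes sense: since $X$ is countable it is Lindel\"{o}f, hence realcompact by the results in Section~\ref{section_calculating}, so $\Exp(X)$ is defined. I also silently use that $\pppp\geq\aleph_1$, so ``$\kappa<\pppp$'' is compatible with ``$\kappa$ infinite.''

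By the definition of $\Exp(X)$, we may assume $X$ is a closed subspace of $\RRR^\kappa$. Since $\RRR$ is Polish, this realizes $X$ as a closed countable crowded subspace of a product of $\kappa$ Polish spaces, with $\kappa<\pppp$. This directly contradicts Corollary \ref{corollary_lower_bound_polish}.

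There is essentially no obstacle: all the work has been done in Theorem~\ref{theorem_lower_bound_omega} and Corollary~\ref{corollary_lower_bound_polish}. The only thing to double-check is the reduction from ``closed embedding in $\RRR^\kappa$'' to the hypothesis of Corollary~\ref{corollary_lower_bound_polish}, which is immediate because each factor $\RRR$ is Polish.
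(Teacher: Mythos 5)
Your argument is correct and is exactly the intended derivation: the paper states Corollary \ref{corollary_lower_bound} as an immediate consequence of Corollary \ref{corollary_lower_bound_polish}, obtained precisely by viewing a hypothetical closed copy of $X$ in $\RRR^\kappa$ with $\kappa<\pppp$ as a closed countable crowded subspace of a product of $\kappa$ Polish spaces. Nothing is missing; your side remarks (Lindel\"{o}fness giving realcompactness, and $\Exp(X)$ being infinite by definition) are the right sanity checks.
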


\section{A countable crowded space with realcompactness number $\pppp$}\label{section_p}

In this section, we will construct the space promised by Theorem \ref{theorem_main} in the case $\kappa=\pppp$. Once this is established, the general case will follow without much trouble (see Section \ref{section_bumping}). We will assume some familiarity with the basic theory of boolean algebras and Stone duality (see \cite{koppelberg}, or \cite[Section III.4]{kunen} for a concise exposition). While the core of the construction is given in the proof of Theorem \ref{theorem_example_p}, we will need several technical preliminaries concerning embeddings and filters.

Let partial orders $\PPP$ and $\PPP'$ be given. Inspired by \cite[Definition III.3.65]{kunen}, we will say that $i:\PPP\longrightarrow\PPP'$ is a \emph{pleasant embedding}\footnote{\,By adding the requirement that $i[A]$ is a maximal antichain in $\PPP'$ whenever $A$ is a maximal antichain in $\PPP$, one obtains the definition of \emph{complete embedding}.} if the following conditions are satisfied:
\begin{enumerate}
\item $i(\mathbb{1})=\mathbb{1}$,
\item $\forall p,q\in\PPP\,\big(p\leq q\rightarrow i(p)\leq i(q)\big)$,
\item $\forall p,q\in\PPP\,\big(p\perp q\leftrightarrow i(p)\perp i(q)\big)$.
\end{enumerate}
We will say that $i$ is a \emph{dense embedding} if it satisfies all of the above conditions plus the following:
\begin{enumerate}
\item[(4)] $i[\PPP]$ is dense in $\PPP'$.
\end{enumerate}
Also recall that $\PPP$ is \emph{separative} if for all $p,q\in\PPP$ such that $p\nleq q$ there exists $r\in\PPP$ such that $r\leq p$ and $r\perp q$.

We will say that $\PPP$ is \emph{meet-friendly} if whenever $p,q\in\PPP$ are compatible the set $\{p,q\}$ has a greatest lower bound, which we will denote by $p\wedge q$. In this case, it is clear that $p\wedge q\in\FF$ whenever $\FF$ is a filter on $\PPP$ and $p,q\in\FF$. Notice that $\PPP$ is meet-friendly iff every centered finite subset $\{p_0,\ldots,p_n\}$ of $\PPP$ has a greatest lower bound, which we will denote by $p_0\wedge\cdots\wedge p_n$. An important example of meet-friendly partial-order is given by $\BBB\setminus\{\zero\}$ whenever $\BBB$ is a boolean algebra. When $\PPP$ is meet-friendly and $\CC$ is a non-empty centered subset of $\PPP$, it makes sense to consider the filter $\FF$ generated by $\CC$ (that is, the smallest filter on $\PPP$ containing $\CC$). In fact, it is easy to check that this filter has the following familiar form:
$$
\FF=\{p\in\PPP:p_0\wedge\cdots\wedge p_n\leq p\text{ for some }n\in\omega\text{ and }p_0,\ldots,p_n\in\CC\}.
$$
Finally, when $\PPP$ and $\PPP'$ are meet-friendly, we will say that a pleasant embedding $i:\PPP\longrightarrow\PPP'$ is \emph{meet-preserving} if the following condition is satisfied:
\begin{enumerate}
\item[(5)] $\forall p,q\in\PPP\,\big(p\not\perp q\rightarrow i(p\wedge q)=i(p)\wedge i(q)\big)$.
\end{enumerate}

\begin{lemma}\label{lemma_ultrafilter}
Let $\PPP$ be a meet-friendly partial order, and let $\FF$ be a filter on $\PPP$. Then the following conditions are equivalent:
\begin{enumerate}
\item[(A)] $\FF$ is an ultrafilter,
\item[(B)] $\forall p\in\PPP\setminus\FF\,\exists q\in\FF\, (p\perp q)$.
\end{enumerate}
\end{lemma}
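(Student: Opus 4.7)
The plan is to handle the two implications separately, with the meet-friendly hypothesis playing a role only in one of them.

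For (B) $\Rightarrow$ (A), I would argue by contradiction without invoking meet-friendliness. Assume $\FF$ is not maximal and pick a filter $\FF' \supsetneq \FF$. Choose any $p \in \FF' \setminus \FF$. By (B) there is some $q \in \FF \subseteq \FF'$ with $p \perp q$; but $p$ and $q$ must be compatible, since any two elements of a filter admit a common lower bound in that filter. This contradiction establishes the implication.

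For (A) $\Rightarrow$ (B), I would proceed by contrapositive. Suppose $p \in \PPP \setminus \FF$ is compatible with every element of $\FF$; the goal is to exhibit a filter $\FF' \supsetneq \FF$ witnessing that $\FF$ is not maximal. The natural candidate is the filter generated by $\CC = \FF \cup \{p\}$, and the paper's description of filters generated by centered sets in meet-friendly posets guarantees that this filter exists and contains $\CC$ once $\CC$ is shown to be centered. The element $p$ then lies in $\FF' \setminus \FF$, contradicting (A).

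All that remains is verifying centeredness of $\CC$. Given a finite $F \subseteq \CC$, the case $p \notin F$ is trivial since $F \subseteq \FF$ and filters are centered. Otherwise write $F = \{p, q_0, \ldots, q_n\}$ with $q_0, \ldots, q_n \in \FF$. Meet-friendliness together with the centeredness of $\FF$ gives $q = q_0 \wedge \cdots \wedge q_n \in \FF$, and the compatibility hypothesis on $p$ yields an $r \in \PPP$ with $r \leq p$ and $r \leq q$; such an $r$ is a common lower bound for $F$.

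The obstacle is not deep, but it is conceptually important to identify: meet-friendliness is exactly what converts the pointwise compatibility statement ``$p$ is compatible with every $q \in \FF$'' into the single assertion ``$p$ is compatible with $q_0 \wedge \cdots \wedge q_n$'' that finite-subset centeredness demands. Without this hypothesis, pairwise compatibility alone would not suffice to extend $\FF$ by adjoining $p$, and the direction (A) $\Rightarrow$ (B) would genuinely fail.
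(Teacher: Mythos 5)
Your proof is correct and follows essentially the same route as the paper: the easy direction is identical, and in the hard direction your filter generated by the centered set $\FF\cup\{p\}$ coincides with the paper's explicitly defined extension $\GG=\FF\cup\{r\in\PPP:r\geq p\wedge q\text{ for some }q\in\FF\}$, with meet-friendliness used in exactly the same way.
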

\begin{proof}
In order to prove that $(\text{A})\rightarrow (\text{B})$, assume that $p\in\PPP\setminus\FF$ is such that $p\not\perp q$ for every $q\in\FF$. Since $\PPP$ is meet-friendly, it makes sense to consider
$$
\GG=\FF\cup\{r\in\PPP:r\geq p\wedge q\text{ for some }q\in\FF\}.
$$
It is straightforward to check that $\GG\supsetneq\FF$ is a filter on $\PPP$, hence $\FF$ is not an ultrafilter. In order to prove that $(\text{B})\rightarrow (\text{A})$, assume that $\GG\supsetneq\FF$ is a filter on $\PPP$. It is clear that any $p\in\GG\setminus\FF$ will witness the failure of condition $(\text{B})$.
\end{proof}

\begin{lemma}\label{lemma_pull}
Let $\PPP$ and $\PPP'$ be meet-friendly partial orders, and let $i:\PPP\longrightarrow\PPP'$ be a meet-preserving pleasant embedding. If $\GG$ is a filter on $\PPP'$ then $i^{-1}[\GG]$ is a filter on $\PPP$.
\end{lemma}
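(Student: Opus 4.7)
My plan is to verify directly the three axioms in the definition of a filter for $\FF := i^{-1}[\GG]$, leveraging the five properties (1)--(5) of $i$ in turn, plus the meet-friendliness hypotheses where needed.

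For the first axiom, condition (1) gives $i(\one_{\PPP}) = \one_{\PPP'} \in \GG$, so $\one_{\PPP} \in \FF$. For upward closure, if $p \in \FF$ and $p \leq q$ in $\PPP$, then $i(p) \in \GG$ and $i(p) \leq i(q)$ by condition (2); since $\GG$ is upward closed, $i(q) \in \GG$, hence $q \in \FF$.

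The interesting axiom is the downward-directedness: given $p,q \in \FF$, I need an element of $\FF$ below both. Since $i(p), i(q) \in \GG$, the filter $\GG$ provides an $r' \in \GG$ with $r' \leq i(p), i(q)$; in particular $i(p) \not\perp i(q)$, so by condition (3) we have $p \not\perp q$, and meet-friendliness of $\PPP$ yields $p \wedge q \in \PPP$. The crucial step is now to show $p \wedge q \in \FF$: meet-preservation (condition (5)) gives $i(p \wedge q) = i(p) \wedge i(q)$, and the witness $r'$ shows that $i(p) \wedge i(q)$ exists in $\PPP'$ (by meet-friendliness of $\PPP'$) with $r' \leq i(p) \wedge i(q)$, so upward closure of $\GG$ forces $i(p) \wedge i(q) \in \GG$. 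Thus $i(p \wedge q) \in \GG$, i.e.\ $p \wedge q \in \FF$, and clearly $p \wedge q \leq p, q$.

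There is really no obstacle here beyond keeping the bookkeeping straight: condition (5) is exactly the hypothesis that was engineered to make this argument go through, and conditions (1)--(3) handle the other two axioms almost by inspection. I expect the whole proof to take at most a few lines.
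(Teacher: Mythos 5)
Your proof is correct and follows essentially the same route as the paper's: conditions (1) and (2) handle $\one\in\FF$ and upward closure, and condition (3) plus meet-preservation (5) give $i(p\wedge q)=i(p)\wedge i(q)\in\GG$ for the directedness axiom. The only difference is cosmetic: where the paper invokes its earlier remark that filters on a meet-friendly order contain the meets of their elements, you spell out that argument explicitly via the witness $r'\leq i(p)\wedge i(q)$ and upward closure of $\GG$.
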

\begin{proof}
Let $\GG$ be a filter on $\PPP'$, and set $\FF=i^{-1}[\GG]$. The fact that $\mathbb{1}\in\FF$ follows from condition $(1)$. In order to show that $\FF$ is upward-closed, let $p\in\FF$ and $q\in\PPP$ be such that $p\leq q$. Since $i(p)\in\GG$ and $i(p)\leq i(q)$ by condition $(2)$, we must have $i(q)\in\GG$. Hence $q\in\FF$, as desired. To complete the proof, pick $p,q\in\FF$. Since $i(p),i(q)\in\GG$, we must have $i(p)\not\perp i(q)$, hence $p\not\perp q$ by condition $(3)$. Therefore, we can consider $p\wedge q$, and observe that $i(p\wedge q)=i(p)\wedge i(q)\in\GG$ by condition $(5)$. It follows that $p\wedge q\in\FF$, as desired.
\end{proof}

\begin{lemma}\label{lemma_push}
Let $\PPP$ be a meet-friendly partial order, let $\BBB$ be a boolean algebra, and let $i:\PPP\longrightarrow\BBB\setminus\{\zero\}$ be a pleasant embedding. Assume that $i[\PPP]$ generates $\BBB$ as a boolean algebra. If $\UU$ is an ultrafilter on $\PPP$ then $i[\UU]$ generates an ultrafilter on $\BBB\setminus\{\zero\}$.
\end{lemma}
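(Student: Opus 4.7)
My plan is to let $\GG$ be the filter generated by $i[\UU]$ on $\BBB\setminus\{\zero\}$ and verify the ultrafilter dichotomy: for every $b\in\BBB\setminus\{\zero\}$, exactly one of $b,b^c$ lies in $\GG$. This is precisely the characterization of ultrafilters on $\BBB\setminus\{\zero\}$ recorded in Section \ref{section_preliminaries}. To make sense of $\GG$, first I would check that $i[\UU]$ is centered: given $p_0,\dots,p_n\in\UU$, since $\UU$ is a filter on the meet-friendly $\PPP$ there exists $q\in\UU$ with $q\leq p_k$ for each $k$, and then condition $(2)$ yields $\zero<i(q)\leq i(p_0)\wedge\cdots\wedge i(p_n)$, so the meet is nonzero.

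The core step is to establish the dichotomy for elements of $i[\PPP]$: for every $p\in\PPP$, either $i(p)\in\GG$ or $i(p)^c\in\GG$. If $p\in\UU$, then $i(p)\in i[\UU]\subseteq\GG$ directly. If $p\notin\UU$, I would apply Lemma \ref{lemma_ultrafilter} to obtain some $q\in\UU$ with $p\perp q$; condition $(3)$ then gives $i(p)\wedge i(q)=\zero$, so $i(q)\leq i(p)^c$, and since $i(q)\in\GG$ with $\GG$ upward-closed, this forces $i(p)^c\in\GG$.

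Finally, I would transfer this dichotomy to an arbitrary $b\in\BBB\setminus\{\zero\}$. Since $i[\PPP]$ generates $\BBB$ as a boolean algebra, such a $b$ lies in the finite boolean subalgebra $\BBB_0$ generated by some $i(p_1),\dots,i(p_n)$. For each $k$, pick $a_k\in\{i(p_k),i(p_k)^c\}$ with $a_k\in\GG$, available from the previous step. A straightforward induction using that $\GG$ is a filter on the meet-friendly $\BBB\setminus\{\zero\}$ gives $c:=a_1\wedge\cdots\wedge a_n\in\GG$; in particular $c\neq\zero$, so $c$ is an atom of $\BBB_0$. Since every element of $\BBB_0$ is a join of atoms of $\BBB_0$, one of $c\leq b$ or $c\leq b^c$ must hold, and upward closure of $\GG$ concludes.

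The hard part will really only be the dichotomy for generators, where one must exploit both Lemma \ref{lemma_ultrafilter} and the $\perp$-preservation clause of pleasant embeddings simultaneously; the last step is a routine appeal to the atom structure of a finite boolean algebra, and the initial centeredness check is immediate from monotonicity.
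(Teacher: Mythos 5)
Your proof is correct. The pivotal step coincides exactly with the paper's: to decide a generator $i(p)$ when $p\notin\UU$, you invoke Lemma \ref{lemma_ultrafilter} to get $q\in\UU$ with $p\perp q$, then use clause $(3)$ of pleasant embeddings to conclude $i(q)\leq i(p)^c$, so the filter $\GG$ generated by $i[\UU]$ contains $i(p)$ or $i(p)^c$; this is precisely the paper's Claim 2, and both arguments finish by the same characterization of ultrafilters on $\BBB\setminus\{\zero\}$ from the preliminaries. Where you diverge is the passage from generators to all of $\BBB$. The paper defines $\BBB'=\{a\in\BBB:i(p)\leq a\text{ or }i(p)\leq a^c\text{ for some }p\in\UU\}$ and checks directly (using the filter property of $\UU$ and monotonicity of $i$) that $\BBB'$ is a boolean subalgebra containing $i[\PPP]$, hence equals $\BBB$; you instead locate $b$ in the finite subalgebra $\BBB_0$ generated by finitely many $i(p_k)$, put the corresponding nonzero minterm $c=a_1\wedge\cdots\wedge a_n$ into $\GG$ (legitimate, since $\BBB\setminus\{\zero\}$ is meet-friendly and $\GG$ is a filter there), observe that $c$ is an atom of $\BBB_0$ and therefore satisfies $c\leq b$ or $c\leq b^c$, and conclude by upward closure. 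Both transfers are sound and of comparable length: the paper's avoids any appeal to the atom/normal-form structure of finite boolean algebras but must verify closure of $\BBB'$ under joins, while yours trades that verification for the standard fact that every element of a finitely generated (hence finite) boolean algebra is a join of its atoms, the atoms being the nonzero minterms. One cosmetic remark: you only need ``at least one of $b,b^c$ lies in $\GG$''; the ``exactly one'' is automatic (a filter on $\BBB\setminus\{\zero\}$ cannot contain two incompatible elements) and is not required by the characterization being applied.
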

\begin{proof}
Pick an ultrafilter $\UU$ on $\PPP$. Since $i[\UU]$ is centered by condition $(2)$, we can consider the filter $\VV$ on $\BBB$ generated by $i[\UU]$. Define
$$
\BBB'=\{a\in\BBB:i(p)\leq a\text{ or }i(p)\leq a^c\text{ for some }p\in\UU\}.
$$

\noindent\textbf{Claim 1.} $\BBB'$ is a boolean subalgebra of $\BBB$.

\noindent\textit{Proof.} The fact that $\mathbb{1}\in\BBB'$ follows from condition $(1)$. Furthermore, it is clear that $a\in\BBB'$ iff $a^c\in\BBB'$. In order to show that $\BBB'$ is closed under $\vee$, pick $a,b\in\BBB'$. If $i(p)\leq a$ or $i(p)\leq b$ for some $p\in\UU$, then it is clear that $a\vee b\in\BBB'$. So let $p,q\in\UU$ be such that $i(p)\leq a^c$ and $i(q)\leq b^c$. Since $\UU$ is a filter, we can pick $r\in\UU$ such that $r\leq p$ and $r\leq q$. It follows from condition $(2)$ that
$$
i(r)\leq i(p)\wedge i(q)\leq a^c\wedge b^c=(a\vee b)^c,
$$
which shows that $a\vee b\in\BBB'$, as desired. $\blacksquare$

\noindent\textbf{Claim 2.} $i[\PPP]\subseteq\BBB'$.

\noindent\textit{Proof.} Pick $a\in i[\PPP]$, and let $p\in\PPP$ be such that $i(p)=a$. If $p\in\UU$ then clearly $a\in\BBB'$, so assume that $p\notin\UU$. Since $\PPP$ is meet-friendly and $\UU$ is an ultrafilter, Lemma \ref{lemma_ultrafilter} yields $q\in\UU$ such that $p\perp q$. Notice that $i(p)\perp i(q)$ by condition $(3)$, hence $i(q)\leq i(p)^c=a^c$. This shows that $a\in\BBB'$, as desired. $\blacksquare$

Since $i[\PPP]$ generates $\BBB$ as a boolean algebra, it follows from Claims 1 and 2 that $\BBB'=\BBB$. By the definition of $\BBB'$, this shows that $\VV$ is an ultrafilter on $\BBB\setminus\{\zero\}$.
\end{proof}

Given $a,b\in [\omega]^{<\omega}$, we will write $a\preccurlyeq b$ to mean $a\subseteq b$ and $b\setminus a\subseteq\omega\setminus\maxi(a)$. We will also write $a\prec b$ to mean $a\preccurlyeq b$ and $a\neq b$. Given a subset $\CC$ of $[\omega]^\omega$ with the $\SFIP$, define
$$
\PPP(\CC)=\{(a,F):a\in [\omega]^{<\omega}\text{ and }F\in [\CC]^{<\omega}\}.
$$
Order $\PPP(\CC)$ by declaring $(a,F)\leq (b,G)$ if the following conditions hold:
\begin{itemize}
\item $b\preccurlyeq a$,
\item $G\subseteq F$,
\item $a\setminus b\subseteq\bigcap G$.	
\end{itemize}
This is of course the standard partial order that generically produces a pseudointersection of $\CC$. We remark that $\PP(\CC)$ is always  meet-friendly. In fact, given $(a,F),(b,G)\in\PPP(\CC)$ such that $(a,F)\not\perp (b,G)$, it is easy to realize that $(a\cup b,F\cup G)$ is the greatest lower bound of $\{(a,F),(b,G)\}$.

\begin{theorem}\label{theorem_example_p}
There exists a countable crowded space $X$ such that $\Exp(X)=\pppp$.	
\end{theorem}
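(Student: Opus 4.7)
By Corollary \ref{corollary_lower_bound}, every countable crowded space satisfies $\Exp(X) \geq \pppp$, so the task reduces to constructing a countable crowded $X$ with $\Exp(X) \leq \pppp$. Via Theorem \ref{theorem_Exp}, this in turn amounts to producing a compactification $K$ of $X$ with $\weight(K) \leq \pppp$ such that $K \setminus X$ is a union of at most $\pppp$ compact sets. The plan is to build $K$ and $X$ via Stone duality, starting from the poset $\PPP(\CC)$ for a suitably chosen independent family $\CC$, and then exploit the absence of pseudointersections of $\CC$ to decompose the remainder.

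I would begin by fixing, via Proposition \ref{proposition_nyikos}, an independent family $\CC \subseteq [\omega]^\omega$ of size $\pppp$ with no pseudointersection, and setting $\PPP = \PPP(\CC)$. The space $X$ will be $[\omega]^{<\omega}$ with the zero-dimensional topology generated by the basic clopen sets
$$
V_{(a,F)} = \{b \in [\omega]^{<\omega} : a \preccurlyeq b \text{ and } b \setminus a \subseteq \bigcap F\}
$$
for $(a,F) \in \PPP$. Each $V_{(a,F)}$ is infinite by the $\SFIP$, which makes $X$ countable, Hausdorff, and crowded. Let $\BBB$ be the boolean subalgebra of $\PP(X)$ generated by the $V_{(a,F)}$ and put $K = S(\BBB)$; then $|\BBB| \leq \pppp$, so $\weight(K) \leq \pppp$, and the assignment $b \mapsto \UU_b = \{V \in \BBB : b \in V\}$ embeds $X$ into $K$. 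A routine verification (relying on the explicit meet formula $(a,F) \wedge (a',F') = (a \cup a', F \cup F')$ recorded in the text) should establish that $i(a,F) := V_{(a,F)}$ is a dense, meet-preserving, pleasant embedding of $\PPP$ into $\BBB \setminus \{\zero\}$, so that Lemmas \ref{lemma_pull} and \ref{lemma_push} combine with density to yield a bijection between the points of $K$ and the ultrafilters on $\PPP$, given by $\UU \mapsto i^{-1}[\UU]$.

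The crucial step is to bound $\kcs(X) \leq \pppp$. For each $C \in \CC$ I would form
$$
K_C = \{\UU \in K : V_{(a,F)} \notin \UU \text{ whenever } C \in F\},
$$
which is closed (hence compact) in $K$ and disjoint from $X$, since $V_{(b, \{C\})} \in \UU_b$ for every $b$. The goal is to prove $K \setminus X \subseteq \bigcup_{C \in \CC} K_C$, which by Proposition \ref{proposition_kcs} yields $\kcs(X) \leq \pppp$. Given $\UU \in K \setminus X$ outside every $K_C$, for each $C$ pick $(a_C, F_C) \in \FF := i^{-1}[\UU]$ with $C \in F_C$; the meet $(a, F) \wedge (a_C, F_C) = (a \cup a_C, F \cup F_C)$ must lie in $\FF$, which forces $a \setminus a_C \subseteq \bigcap F_C \subseteq C$ for every $(a,F) \in \FF$. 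Hence $A := \bigcup\{a : (a,F) \in \FF\}$ satisfies $A \subseteq^\ast C$ for all $C \in \CC$. If $A$ is infinite it is a pseudointersection of $\CC$, contradicting the choice of $\CC$; if $A = b^\ast$ is finite, then combining stems that cover $b^\ast$ produces some $(b^\ast, F^\ast) \in \FF$, and re-applying the meet forces every $(a, F) \in \FF$ to satisfy $a \preccurlyeq b^\ast$ and $b^\ast \setminus a \subseteq \bigcap F$, i.e.\ $\FF \subseteq i^{-1}[\UU_{b^\ast}]$; by maximality of the ultrafilter $\FF$ we conclude $\UU = \UU_{b^\ast} \in X$, a contradiction. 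An appeal to Theorem \ref{theorem_Exp} then gives $\Exp(X) \leq \max\{\weight(X), \kcs(X)\} \leq \pppp$.

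I expect the main obstacle to be the finite-case dichotomy above, which leverages a non-obvious rigidity of the $\preccurlyeq$-order: two members of a filter on $\PPP$ must admit a common $\PPP$-extension, and this propagates asymmetric $\preccurlyeq$-comparisons (and the inclusions $b^\ast \setminus a \subseteq \bigcap F$) to every element of the filter simultaneously. By contrast, verifying the embedding properties of $i$, the crowdedness and zero-dimensionality of $X$, and the closedness of each $K_C$ should be essentially mechanical.
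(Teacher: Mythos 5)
Your construction is essentially the paper's, presented concretely: the same poset $\PPP(\CC)$ for a Nyikos family, the same dichotomy (an infinite union of stems would be a pseudointersection; a finite union pins down a point of $X$), and the same covering of the remainder indexed by $\CC$. One minor omission first: Hausdorffness of $X$ is not automatic. If some $m$ lies in every member of $\CC$ and $m>\maxi(b)$, then every basic neighbourhood $V_{(b,G)}$ of $b$ contains $b\cup\{m\}$, so $b$ and $b\cup\{m\}$ cannot be separated and the sets $V_{(a,F)}$ are not clopen. You need the paper's normalization that every $n\in\omega$ is omitted by some member of $\CC$ (harmless, since $\bigcap\CC$ is finite because $\CC$ has no pseudointersection); your later steps (density of $i[\PPP]$ in $\BBB\setminus\{\zero\}$, and the fact that members of $\BBB$ are clopen in $X$) depend on it.

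The genuine gap is your closing deduction. Lemma \ref{lemma_pull} only makes $\FF=i^{-1}[\UU]$ a filter on $\PPP$, not an ultrafilter, and the asserted bijection between points of $K$ and ultrafilters on $\PPP$ does not follow from Lemmas \ref{lemma_pull} and \ref{lemma_push} plus density; in fact it fails for your $\BBB$: the family $\{X\setminus V_{(a,F)}:a\neq\varnothing\}\cup\{X\setminus V_{(\varnothing,G)}:G\neq\varnothing\}$ is centered (finitely many members of an independent family never cover $\omega$, so a suitable two-element $b$ lies in any finite intersection), hence some ultrafilter on $\BBB$ has trace $\{(\varnothing,\varnothing)\}$ on $i[\PPP]$, which is not an ultrafilter on $\PPP$. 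Moreover, the inclusion you actually prove, $\FF\subseteq i^{-1}[\UU_{b^\ast}]$, points the wrong way: it yields $\UU=\UU_{b^\ast}$ only if you already know that every member of $\UU$ contains some $V_{(a,G)}\in\UU$, which is essentially what is to be shown. What is needed is the reverse inclusion, and your data give it by the same meet trick: for every finite $F\subseteq\CC$, the meet of $(b^\ast,F^\ast)$ with the witnesses $(a_C,F_C)$ for $C\in F$ has stem exactly $b^\ast$ (each $a_C\subseteq b^\ast$) and its side condition contains $F$ (each $C\in F_C$), so $(b^\ast,F)\in\FF$; consequently every $(a,G)$ with $b^\ast\in V_{(a,G)}$ lies in $\FF$. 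Since members of $\BBB$ are clopen in $X$ and the $V_{(a,G)}$ form a base, the point ultrafilter $\UU_{b^\ast}$ is generated by $\{V_{(a,G)}:b^\ast\in V_{(a,G)}\}\subseteq\UU$, whence $\UU_{b^\ast}\subseteq\UU$ and maximality gives $\UU=\UU_{b^\ast}$, the desired contradiction. This is exactly the role played in the paper by Claim 1 and Lemma \ref{lemma_push} together with the meet computation at the end of its Claim 5; with these repairs your argument goes through and coincides in substance with the paper's proof.
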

\begin{proof}
By Proposition \ref{proposition_nyikos}, we can fix an independent family $\Aa$ of size $\pppp$ with no pseudointersection. Without loss of generality, assume that for every $n\in\omega$ there exists $A\in\Aa$ such that $n\notin A$. Set $\PPP=\PPP(\Aa)$. Given $a\in [\omega]^{<\omega}$, denote by $\UU_a$ the filter on $\PPP$ generated by $\{(a,F):F\in [\Aa]^{<\omega}\}$.

\noindent\textbf{Claim 1.} Each $\UU_a$ is an ultrafilter on $\PPP$.

\noindent\textit{Proof.} Fix $a\in [\omega]^{<\omega}$. Pick $(b,G)\in\PPP$ that is compatible with $(a,F)$ for every $F\in [\Aa]^{<\omega}$. We will show that $(b,G)\in\UU_a$. It is easy to realize that either $a\prec b$ or $b\preccurlyeq a$. First assume that $a\prec b$, and pick $n\in b\setminus a$. By our choice of $\Aa$, there exists $A\in\Aa$ such that $n\notin A$. It is clear that $(a,\{A\})\perp (b,G)$, which shows that this case is impossible. Therefore $b\preccurlyeq a$. Pick $(c,H)\in\PPP$ such that $(c,H)\leq (b,G)$ and $(c,H)\leq (a,G)$. Observe that $a\setminus b\subseteq c\setminus b\subseteq\bigcap G$, hence $(a,G)\leq (b,G)$. It follows that $(b,G)\in\UU_a$, as desired. $\blacksquare$

\noindent\textbf{Claim 2.} $\PPP$ is separative.

\noindent\textit{Proof.} Pick $(a,F),(b,G)\in\PPP$ such that $(a,F)\nleq (b,G)$. We will find $(c,H)\leq (a,F)$ that is incompatible with $(b,G)$. First assume that $b\nsubseteq a$. Pick $n\in\bigcap F$ big enough so that $n>\maxi(a)$ and $n>\maxi(b)$. It is easy to realize that $(a\cup\{n\},F)\leq (a,F)$ and that $(a\cup\{n\},F)\perp (b,G)$.

Next, assume that $b\subseteq a$ but $b\not\preccurlyeq a$. This means that there exists $n\in a\setminus b$ such that $n<\maxi(b)$, hence $(a,F)\perp (b,G)$. Next, assume that $b\preccurlyeq a$ but there exists $n\in a\setminus b$ such that $n\notin\bigcap G$. It is clear that $(a,F)\perp (b,G)$ in this case as well.

Finally, assume that $b\preccurlyeq a$, $a\setminus b\subseteq\bigcap G$, but $G\nsubseteq F$. Let $A\in G\setminus F$. Since $\Aa$ is an independent family, it is possible to pick $n\in (\omega\setminus A)\cap\bigcap F$ big enough so that $n>\maxi(a)$. It is clear that $(a\cup\{n\},F)\leq (a,F)$ and that $(a\cup\{n\},F)\perp (b,G)$. $\blacksquare$

Given $p\in\PPP$, we will use the notation $p\down=\{q\in\PPP :q\leq p\}$. Declare $U\subseteq\PPP$ to be open if $p\down\subseteq U$ for every $p\in U$. We will denote by $\RO(\PPP)$ the regular open algebra of $\PPP$ according to this topology (see \cite[Definition III.4.6]{kunen}). Define $i:\PPP\longrightarrow\RO(\PPP)\setminus\{\zero\}$ by setting $i(p)=p\down$ for $p\in\PPP$. It follows from Claim 2 and \cite[Lemma III.4.8 and Exercise III.4.15]{kunen} that $i$ is a well-defined dense embedding, and it is straightforward to verify that $i$ is meet-preserving. Furthermore, by \cite[Exercise III.4.11]{kunen}, the following stronger form of condition $(2)$ holds:
\begin{enumerate}
\item[$(2')$] $\forall p,q\in\PPP\,\big(p\leq q\leftrightarrow i(p)\leq i(q)\big)$.
\end{enumerate}

Let $\BBB$ be the boolean subalgebra of $\RO(\PPP)$ generated by $i[\PPP]$, and let
$$
Z=\{\VV:\VV\text{ is an ultrafilter on }\BBB\setminus\{\zero\}\}
$$
denote the Stone space of $\BBB$. Given $b\in\BBB$, we will denote by $[b]=\{\VV\in Z:b\in\VV\}$ the corresponding basic clopen subset of $Z$. By Claim 1 and Lemma \ref{lemma_push}, each $i[\UU_a]$ generates an ultrafilter on $\BBB\setminus\{\zero\}$, which we will denote by $\VV_a$. Finally, set
$$
X=\{\VV_a:a\in [\omega]^{<\omega}\}.
$$

\noindent\textbf{Claim 3.} $Z$ is crowded.

\noindent\textit{Proof.} This is equivalent to showing that $\BBB$ has no atoms. So pick $b\in\BBB\setminus\{\zero\}$. By condition $(4)$, there exists $p\in\PPP$ such that $i(p)\leq b$. Now choose any $q\in\PPP$ such that $q<p$, and observe that $\zero <i(q)<i(p)\leq b$ by condition $(2')$. This shows that $b$ is not an atom. $\blacksquare$

\noindent\textbf{Claim 4.} $X$ is a countable dense subset of $Z$.

\noindent\textit{Proof.} The fact that $X$ is countable is clear. In order to see that $X$ is dense, pick a non-empty open subset $U$ of $Z$. Without loss of generality, assume that $U=[b]$ for some $b\in\BBB\setminus\{\zero\}$. By condition $(4)$, we can actually assume that $b=(a,F)\down$ for some $(a,F)\in\PPP$. It is clear that $b\in i[\UU_a]\subseteq\VV_a$, hence $\VV_a\in [b]=U$. $\blacksquare$

It follows from Claims 3 and 4 that $X$ is a countable crowded space, and that $Z$ is a compactification of $X$. Furthermore, it is easy to see that $\weight(X)\leq\weight(Z)=|\BBB|=\pppp$, where $|\BBB|\geq\pppp$ holds by condition $(2')$. Since $\Exp(X)\geq\pppp$ by Corollary \ref{corollary_lower_bound}, the following claim will conclude the proof by Theorem \ref{theorem_Exp}.

Fix an enumeration $\Aa=\{A_\xi:\xi\in\pppp\}$. Given $\xi\in\pppp$, set
$$
U_\xi=\bigcup_{a\in [\omega]^{<\omega}}[(a,A_\xi)\down],
$$
and observe that each $U_\xi$ is an open subset of $Z$.

\noindent\textbf{Claim 5.} $X=\bigcap_{\xi\in\pppp}U_\xi$.

\noindent\textit{Proof.} The inclusion $\subseteq$ is straightforward. In order to prove the inclusion $\supseteq$, pick $\VV\in\bigcap_{\xi\in\pppp}U_\xi$. This means that for every $\xi\in\pppp$ we can fix $a_\xi\in [\omega]^{<\omega}$ such that $(a_\xi,A_\xi)\down\in\VV$. Set $\UU=i^{-1}[\VV]$, and observe that $\UU$ is a filter on $\PPP$ by Lemma \ref{lemma_pull}. Furthermore, it is clear that each $(a_\xi,A_\xi)\in\UU$.

Set $a=\bigcup_{\xi\in\pppp}a_\xi$. First assume, in order to get a contradiction, that $a$ is infinite. Using the fact that each $(a_\xi,A_\xi)\in\UU$, it is easy to verify that $a\setminus\maxi(a_\xi)\subseteq A_\xi$ for each $\xi$. In other words, the set $a$ is a pseudointersection of $\Aa$, which is a contradiction.

Therefore $a$ is finite, hence we can fix $\xi\in\pppp$ such that $a=a_\xi$. We will show that $\UU=\UU_a$, which easily implies that $\VV=\VV_a$, thus concluding the proof. Since $\UU_a$ is an ultrafilter, it will be enough to show that $\UU_a\subseteq\UU$. So pick $(a,F)\in\UU_a$, where $F=\{A_{\xi_0},\ldots,A_{\xi_k}\}$. As in the proof that $\PPP$ is meet-friendly, one sees that
$$
(a,F\cup\{A_\xi\})=(a_{\xi_0},A_{\xi_0})\wedge\cdots\wedge (a_{\xi_k},A_{\xi_k})\wedge (a_{\xi},A_{\xi})\in\UU,
$$
which clearly implies $(a,F)\in\UU$, as desired. $\blacksquare$
\end{proof}

\section{Bumping-up the weight}\label{section_bumping}

In this section, we will finally exhibit the examples promised in Theorem \ref{theorem_main}. The strategy is to start with the space given by Theorem \ref{theorem_example_p}, then artificially increase its weight. Since we will achieve this by adding a single point, Proposition \ref{proposition_finite_decomposition} will guarantee that the realcompactness number will not grow more than we want it to.

\begin{theorem}\label{theorem_example_general}
Let $\kappa$ be a cardinal such that $\pppp\leq\kappa\leq\cccc$. Then there exists a countable crowded space $X_\kappa$ such that $\Exp(X_\kappa)=\kappa$.
\end{theorem}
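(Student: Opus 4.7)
The plan is to take the countable crowded space $Y$ of Theorem~\ref{theorem_example_p}, for which $\Exp(Y)=\pppp$, and to adjoin a single non-isolated point $p$ to obtain $X_\kappa=Y\cup\{p\}$ with $\chi(p,X_\kappa)=\kappa$. When $\kappa=\pppp$ we take $X_\kappa=Y$; so assume $\pppp<\kappa\leq\cccc$. Using $\kappa\leq\cccc$ and $|C(Y,[0,1])|=\cccc$, a transfinite recursion of length $\kappa$ will produce a family $\{f_\alpha:Y\to[0,1]\}_{\alpha<\kappa}$ of continuous functions that is ``independent'' in the following sense: for every finite $F\subseteq\kappa$, every $\sigma:F\to\{0,1\}$, and every $\varepsilon>0$ there is some $y\in Y$ with $f_\alpha(y)<\varepsilon$ whenever $\sigma(\alpha)=0$ and $f_\alpha(y)>1-\varepsilon$ whenever $\sigma(\alpha)=1$.

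Extend each $f_\alpha$ by $f_\alpha(p)=0$, and topologize $X_\kappa$ with the coarsest topology containing the topology of $Y$ under which each extended $f_\alpha$ is continuous. Then $Y$ sits as an open subspace with its original topology, a local base at $p$ consists of sets of the form $\{p\}\cup\bigcap_{i\leq n}f_{\alpha_i}^{-1}[[0,\varepsilon_i))$, and $X_\kappa$ is countable, Tychonoff (its topology is generated by a Tychonoff topology on $Y$ together with a family of continuous $[0,1]$-valued functions), and crowded (the independence condition with $F=\varnothing$ forces every basic neighborhood of $p$ to meet $Y$). The independence property then forces $\chi(p,X_\kappa)=\kappa$: given any local base at $p$ of size $\mu<\kappa$, we may refine it so that each element is a basic open; the union of all coordinate indices appearing has size at most $\mu\cdot\aleph_0<\kappa$, so some $\alpha^*<\kappa$ is unused, and then the neighborhood $\{p\}\cup f_{\alpha^*}^{-1}[[0,1/2))$ cannot contain any element of the putative base (apply independence with $\sigma(\alpha^*)=1$), a contradiction.

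Since $\weight(Y)\leq\pppp\leq\kappa$, this gives $\weight(X_\kappa)=\kappa$, so Theorem~\ref{theorem_Exp} yields $\Exp(X_\kappa)\geq\kappa$. For the reverse inequality, decompose $X_\kappa=Y\cup\{p\}$ into two Lindel\"of subspaces and apply Proposition~\ref{proposition_finite_decomposition}:
$$
\Exp(X_\kappa)\leq\maxi\{\Exp(Y),\Exp(\{p\}),\weight(X_\kappa)\}=\maxi\{\pppp,\omega,\kappa\}=\kappa.
$$

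The main obstacle is the transfinite construction of the independent family $\{f_\alpha\}_{\alpha<\kappa}$: at each stage $\alpha<\kappa$ one must produce a continuous function $f_\alpha$ realizing any prescribed combinatorial pattern of small and large values on finite subfamilies of previously chosen $f_\beta$'s. The key inductive invariant is that the open ``type sets'' defined by strict inequalities on the previous $f_\beta$'s are nonempty, hence infinite by the crowdedness of $Y$, which provides enough room to pick $f_\alpha$ continuously consistent with all required constraints.
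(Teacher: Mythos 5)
Your overall skeleton is the same as the paper's: adjoin a single point to the space of Theorem~\ref{theorem_example_p} so that its character there is $\kappa$, get $\Exp(X_\kappa)\geq\kappa$ from $\weight(X_\kappa)=\kappa$, and get $\Exp(X_\kappa)\leq\kappa$ from Proposition~\ref{proposition_finite_decomposition} applied to $X_0=Y$, $X_1=\{p\}$. That part of your argument (including the character computation at $p$) is fine. The problems are in the implementation of the neighborhood filter at the new point.

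First, the existence of the ``independent'' family $\{f_\alpha:\alpha<\kappa\}$ is asserted, not proved. The remark $|C(Y,[0,1])|=\cccc$ gives no independence, and the transfinite recursion you gesture at is exactly where the work lies: at a stage $\alpha$ of uncountable cofinality there are $|\alpha|$-many (possibly uncountably many) nonempty ``type sets'' in the countable space $Y$, and you must produce a single continuous $f_\alpha$ that takes values near $0$ and near $1$ inside every one of them; crowdedness of $Y$ alone does not provide this, and no invariant is formulated that would make the recursion go through. Second, and more seriously, the space you define need not be Tychonoff, and the parenthetical justification (``generated by a Tychonoff topology plus continuous $[0,1]$-valued functions'') is not an argument. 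Your independence condition only asserts the \emph{existence} of points realizing each finite $0$--$1$ pattern; it does not prevent all the $f_\alpha$ from vanishing at one fixed point $y_0\in Y$. In that case every basic neighborhood $\{p\}\cup\bigcap_{i\leq n}f_{\alpha_i}^{-1}\big[[0,\varepsilon_i)\big]$ of $p$ contains $y_0$, so $p\in\cl(\{y_0\})$ and the space is not even $T_1$. Similarly, regularity at points $y\in Y$ requires that sufficiently small clopen subsets of $Y$ be closed in $X_\kappa$ (that is, that some neighborhood of $p$ miss them), which again does not follow from independence as you stated it. Both gaps are repaired simultaneously by rigidifying the construction: partition $Y$ into nonempty clopen sets $V_n$ ($n\in\omega$), fix an independent family $\Aa\subseteq[\omega]^\omega$ of size $\kappa$, and take the $f_\alpha$ to be characteristic functions of $\bigcup\{V_n:n\in\omega\setminus A_\alpha\}$ (equivalently, let the neighborhoods of the new point be governed by the filter generated by $\Aa$). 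This is precisely the paper's proof, which in addition verifies regularity directly and deduces Tychonoff via zero-dimensionality; as written, your proposal does not yet yield a Tychonoff example.
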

\begin{proof}
By Theorem \ref{theorem_example_p}, we can fix a countable crowded space $X$ with $\Exp(X)=\pppp$. Since $X$ is zero-dimensional, Lindel\"{o}f, and non-compact, it is possible to fix non-empty clopen subsets $V_n$ of $X$ for $n\in\omega$ such that $\bigcup_{n\in\omega}V_n=X$ and $V_m\cap V_n=\varnothing$ whenever $m\neq n$. Also fix an independent family $\Aa$ of size $\kappa$ (see \cite[Exercise III.1.35]{kunen}), and set
$$
\FF=\{F\subseteq\omega:A_0\cap\cdots\cap A_k\subseteq^\ast F\text{ for some }k\in\omega\text{ and }A_0,\ldots,A_k\in\Aa\}.
$$

Define the space $X_\kappa$ by taking $X\cup\{\ast\}$ as the underlying set, where $\ast\notin X$ simply denotes an extra point, and by declaring $U\subseteq X\cup\{\ast\}$ open exactly when one of the following conditions holds:
\begin{itemize}
\item $U$ is an open subset of $X$,
\item $U=\{\ast\}\cup U'$ for some open subset $U'$ of $X$ such that $\bigcup\{V_n:n\in F\}\subseteq U'$ for some $F\in\FF$.
\end{itemize}
It is straightforward to check that this topology is regular, hence zero-dimensional by \cite[Corollary 6.2.8]{engelking}, hence Tychonoff. Notice that every point of $X$ is non-isolated in $X_\kappa$ because $X$ is crowded, while $\ast$ is non-isolated in $X_\kappa$ because $\varnothing\notin\FF$. This means that $X_\kappa$ is crowded.

\noindent\textbf{Claim.} $\chi(\ast,X_\kappa)=\kappa$.

\noindent\textit{Proof.} It is clear that the open sets of the form
$$
\{\ast\}\cup\bigcup\{V_n:n\in (A_0\cap\cdots\cap A_k)\setminus\ell\},
$$
where $k,\ell\in\omega$ and $A_0,\ldots,A_k\in\Aa$, constitute a local base for $X_\kappa$ at $\ast$. Therefore $\chi(\ast,X_\kappa)\leq\kappa$. Now assume, in order to get a contradiction, that there exists a local base $\BB$ for $X_\kappa$ at $\ast$ such that $|\BB|<\kappa$, and assume without loss of generality that every element of $\BB$ is in the form described above. Since $|\Aa|=\kappa$, we can fix $A\in\Aa$ that does not appear in any of these descriptions of the elements of $\BB$. Since $\BB$ is a local base, there must be $k,\ell\in\omega$ and $A_0,\cdots,A_k\in\Aa\setminus\{A\}$ such that
$$
\{\ast\}\cup\bigcup\{V_n:n\in (A_0\cap\cdots\cap A_k)\setminus\ell\}\subseteq \{\ast\}\cup\bigcup\{V_n:n\in A\}.
$$
It follows that $(A_0\cap\cdots\cap A_k)\setminus\ell\subseteq A$, which contradicts the fact that $\Aa$ is an independent family. $\blacksquare$

Using the above claim and the fact that $\weight(X)\leq\Exp(X)=\pppp\leq\kappa$, one sees that $\weight(X_\kappa)=\kappa$, which clearly implies $\Exp(X_\kappa)\geq\kappa$. To conclude the proof, simply apply Proposition \ref{proposition_finite_decomposition} with $X_0=X$ and $X_1=\{\ast\}$.
\end{proof}

\section{Scattered spaces}\label{section_scattered}

The purpose of this section is to show that the realcompactness number of a countable space is essentially determined by its perfect kernel (see Corollary \ref{corollary_hereditarily_lindelof}). In order to achieve this, we will obtain a bound for the pseudocharacter of a scattered space in any compactification (see Theorem \ref{theorem_scattered}).

We begin with a technical lemma, which was inspired by \cite[Theorem 3.9.2]{engelking}. Recall that an \emph{extension} of a space $X$ is a space $Z$ in which $X$ is dense.

\begin{lemma}\label{lemma_scattered}
Let $X$ be a space, let $\kappa$ be an infinite cardinal, and let $\CC_\xi$ for $\xi\in\kappa$ be open covers of $X$. Assume that $\bigcap\DD\neq\varnothing$ whenever $\DD$ satisfies the following requirements:
\begin{enumerate}
\item $\DD$ consists of closed subsets of $X$,
\item $D_0\cap\cdots\cap D_n\neq\varnothing$ whenever $n\in\omega$ and $D_0,\ldots,D_n\in\DD$,
\item $\forall\xi\in\kappa\,\exists D\in\DD\,\exists U\in\CC_\xi\, (D\subseteq U)$.
\end{enumerate}
Then $\psi(X,Z)\leq\kappa$ for every extension $Z$ of $X$.
\end{lemma}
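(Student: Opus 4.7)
The plan is to exhibit $\kappa$ open sets of $Z$ whose intersection is exactly $X$. For each $\xi\in\kappa$ and each $U\in\CC_\xi$, use the openness of $U$ in $X$ to pick an open set $\tilde{U}\subseteq Z$ with $\tilde{U}\cap X=U$, and set $W_\xi=\bigcup_{U\in\CC_\xi}\tilde{U}$. Because $\CC_\xi$ covers $X$, we have $X\subseteq W_\xi$ for every $\xi$, hence $X\subseteq\bigcap_{\xi\in\kappa}W_\xi$. The task reduces to proving the reverse inclusion, which will witness $\psi(X,Z)\leq\kappa$.

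Argue by contradiction: assume $z\in\bigcap_{\xi\in\kappa}W_\xi\setminus X$, and define
\[
\DD=\{N\cap X:N\text{ is a closed neighborhood of }z\text{ in }Z\},
\]
with the intention of applying the hypothesis to this $\DD$. Condition $(1)$ is immediate, since $N$ closed in $Z$ makes $N\cap X$ closed in $X$. For $(2)$, a finite intersection of closed neighborhoods of $z$ is again a closed neighborhood, so it contains a nonempty open subset of $Z$, which meets $X$ by density. For $(3)$, fix $\xi\in\kappa$; the membership $z\in W_\xi$ yields $z\in\tilde{U}_\xi$ for some $U_\xi\in\CC_\xi$, and the regularity of $Z$ (which follows from the Tychonoff standing assumption) supplies a closed neighborhood $N$ of $z$ with $N\subseteq\tilde{U}_\xi$, so that $N\cap X\in\DD$ and $N\cap X\subseteq\tilde{U}_\xi\cap X=U_\xi$.

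Applying the hypothesis then produces $x\in\bigcap\DD\subseteq X$, and in particular $x\neq z$. Using Hausdorffness and regularity of $Z$, separate $x$ and $z$ by an open neighborhood $A$ of $x$ and a closed neighborhood $N$ of $z$ with $A\cap N=\varnothing$; since $x\in N\cap X\in\DD$ while $x\in A$, this is a contradiction. The main obstacle is the verification of $(3)$: one must replace the open set $\tilde{U}_\xi$ witnessing $z\in W_\xi$ by a closed neighborhood contained in it, and this is precisely the step that forces the definition of $\DD$ to use closed (rather than arbitrary) neighborhoods, and that calls upon the regularity of $Z$.
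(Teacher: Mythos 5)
Your proof is correct and follows essentially the same route as the paper's: extend each cover to open sets of $Z$, take the $\kappa$ unions $W_\xi$, and for a point of $\bigcap_{\xi\in\kappa}W_\xi$ feed the family of traces of its closed neighborhoods into the hypothesis (the paper uses closures of open neighborhoods and argues directly that the point lies in $X$, rather than by contradiction, but this is only a cosmetic difference).
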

\begin{proof}
Pick an extension $Z$ of $X$. Define
$$
U_\xi=\bigcup\{U\text{ open in }Z:U\cap X\in\CC_\xi\}
$$
for $\xi\in\kappa$, and observe that each $U_\xi$ is open in $Z$. Therefore, the following claim will conclude the proof.

\noindent\textbf{Claim.} $X=\bigcap_{\xi\in\kappa}U_\xi$.

\noindent\textit{Proof.} The inclusion $\subseteq$ is clear, since each $\CC_\xi$ is an open cover of $X$. In order to prove the inclusion $\supseteq$, pick $x\in\bigcap_{\xi\in\kappa}U_\xi$. Define
$$
\DD=\{\cl(U)\cap X:U\text{ is an open neighborhood of }x\text{ in }Z\},
$$
where $\cl$ denotes closure in $Z$, and observe that $\bigcap\DD\subseteq\{x\}$. It is obvious that condition $(1)$ holds. Using the fact that $X$ is dense in $Z$, one sees that condition $(2)$ holds. Finally, using the assumption that $x\in\bigcap_{\xi\in\kappa}U_\xi$, it is straightforward to verify that condition $(3)$ holds. Therefore $\bigcap\DD\neq\varnothing$, which implies $\bigcap\DD=\{x\}$. Since clearly $\bigcap\DD\subseteq X$, it follows that $x\in X$. $\blacksquare$
\end{proof}

Given a scattered space $X$ and $S\subseteq X$, define the \emph{Cantor-Bendixson rank} of $S$ as
$$
\rank(S)=\mini\{\xi:X^{(\xi)}\cap S=\varnothing\}.
$$
When $S=\{x\}$, we will simply write $\rank(x)$ to mean $\rank(S)$.

\begin{theorem}\label{theorem_scattered}
Let $X$ be a scattered space, and let $\kappa=\weight(X)$. Then $\psi(X,Z)\leq\kappa$ for every extension $Z$ of $X$.
\end{theorem}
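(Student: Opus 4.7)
The plan is to apply Lemma~\ref{lemma_scattered} by exhibiting a family of $\kappa$ many open covers of $X$ that satisfies its hypothesis. Two preliminary structural facts about Tychonoff scattered spaces will drive the construction. First, every such space is zero-dimensional: this follows by transfinite induction on the Cantor--Bendixson rank, since the isolated points form an open discrete (hence clopen) subspace while $X^{(1)}$ is scattered of strictly smaller rank. Consequently $X$ admits a clopen base $\BB=\{B_\alpha:\alpha<\kappa\}$. Second, for each $x\in X$ of rank $\xi=\rank(x)$ we may fix a clopen $V_x\in\BB$ with $V_x\cap X^{(\xi)}=\{x\}$; the map $x\mapsto V_x$ is injective because $V_x$ determines $x$ as the unique point of $V_x$ of maximal rank, yielding the cardinality bound $|X|\leq\weight(X)=\kappa$.

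Enumerate $X=\{x_\alpha:\alpha<\kappa\}$ (with repetitions if $|X|<\kappa$) and consider the clopen covers
$$
\CC_\alpha=\{V_{x_\alpha},\,X\setminus\{x_\alpha\}\},\quad \alpha<\kappa.
$$
To verify the hypothesis of the lemma, let $\DD$ satisfy (1)--(3); assume without loss of generality that $\DD$ is closed under finite intersections, and suppose for contradiction that $\bigcap\DD=\varnothing$. Pick $D\in\DD$ and a point $y\in D$ of minimum Cantor--Bendixson rank. The crucial identity is
$$
V_y\cap D=\{y\},
$$
since every $z\in V_y\setminus\{y\}$ has rank strictly less than $\rank(y)$ while every $z\in D$ has rank at least $\rank(y)$. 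Applying (3) at the index of $y$ produces two alternatives. If some $D'\in\DD$ is contained in $V_y$, then $D'\cap D\subseteq\{y\}$ and FIP force $\{y\}\in\DD$, so $y\in\bigcap\DD$, a contradiction. Otherwise some $D'\in\DD$ is contained in $X\setminus\{y\}$, and then $D_1=D\cap D'\in\DD$ is strictly smaller than $D$ with strictly larger minimum rank.

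The main obstacle is that the second alternative may iterate indefinitely, producing a descending sequence $D_0\supsetneq D_1\supsetneq\cdots$ in $\DD$ whose minimum ranks strictly increase; FIP does not preclude $\bigcap_n D_n=\varnothing$, and the ordinal ascent need not terminate at a finite stage. To handle this I would enrich the family by including, for each $F\in[X]^{<\omega}$, the cover
$$
\CC_F=\{V_x:x\in F\}\cup\Bigl\{X\setminus\bigcup_{x\in F}V_x\Bigr\},
$$
where the second set is a finite Boolean combination of clopens, hence clopen. The total number of covers remains at most $\kappa$ since $|X|\leq\kappa$. Applying (3) to suitable $\CC_F$ allows the simultaneous elimination of whole finite collections of minimum-rank points of an element of $\DD$, and a transfinite recursion on the minimum rank---exploiting the well-ordering of the ordinals and the bound $|X^{(\xi)}|\leq\kappa$ on each CB-level---forces the descent to terminate, yielding an explicit $y\in\bigcap\DD$. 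The most delicate point is that at limit stages of the recursion one must produce a single $D\in\DD$ compatible with all previously accumulated constraints, and this is precisely where the scattered structure, mediated by the isolating clopens $V_x$, is indispensable.
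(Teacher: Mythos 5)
There are two genuine gaps. The first is structural: your opening claim that every Tychonoff scattered space is zero-dimensional is false --- the paper's own remark after this theorem cites Solomon's example of a scattered space that is not zero-dimensional. The inductive argument you give does not work: the set of isolated points is open and discrete but need not be closed (think of a convergent sequence), and in any case zero-dimensionality of an open subspace and of its complement does not transfer to the whole space. Since your clopen sets $V_x$ and the covers $\CC_F$, whose last member is the complement of a finite union of the $V_x$, are the backbone of your construction, the argument collapses without this. One can try to repair it using regularity (replacing ``clopen'' by nested open sets with closures between them), but that is essentially what forces the paper into its more involved machinery of \emph{focused} base elements and covers $\CC_{U_0,U_1,U_2,U_3}$ indexed by quadruples with $\cl(U_3)\subseteq U_2\subseteq U_1\subseteq\cl(U_1)\subseteq U_0$; it is not a cosmetic fix.

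The second gap is the combinatorial core, which you acknowledge but do not close, and which as sketched does not work. First, if $y\in D$ has minimum rank and some $D'\in\DD$ misses $y$, the minimum rank of $D\cap D'$ need not increase: $D$ may contain many (even infinitely many) points of the same minimal rank, so neither single points nor the finite sets $F$ in $\CC_F$ can eliminate a whole rank level. Second, the proposed transfinite recursion requires, at limit stages, a single element of $\DD$ refining infinitely many previously chosen constraints; but $\DD$ is only assumed closed under finite intersections with the finite intersection property, and the nonemptiness of such infinite intersections is exactly the conclusion $\bigcap\DD\neq\varnothing$ you are trying to prove, so the recursion cannot be grounded. The paper avoids any recursion: after closing $\DD$ under finite intersections and normalizing so that all its members have the same rank $\xi$, it proves in one step (its Claim 2) that the minimal rank $\xi'$ of a focused base element containing some member of $\DD$ equals $\xi$, using a single well-chosen cover $\CC_{U_0,U_1,U_2,U_3}$; the unique point of rank $\xi$ in the witnessing focused set is then shown to lie in every member of $\DD$ directly. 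Your ``isolate the minimal-rank point'' idea is in the right spirit, but without that finite minimality argument (or some genuine substitute for the limit stages) the proof is incomplete.
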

\begin{proof}
Assume without loss of generality that $X$ is non-empty. Define a subset $S$ of $X$ to be \emph{focused} if there exists a unique $x\in S$ such that $\rank(x)=\rank(S)$. Notice that $\rank(x)$ is a successor ordinal for every $x\in X$. Given $x\in X$ such that $\rank(x)=\xi+1$, set
$$
U_x=\big(X\setminus X^{(\xi)}\big)\cup\{x\}.
$$
It is not hard to realize that each $U_x$ is a focused open neighborhood of $x$. Furthermore, it is clear that every neighborhood of $x$ contained in $U_x$ will still be focused. Using this fact, it is possible to fix a base $\BB$ for $X$ of size at most $\kappa$ such that $\BB$ consists of focused open sets. Then define
$$
\CC_0=\{U\in\BB:\cl(U)\subseteq V\text{ for some }V\in\BB\text{ with }\rank(V)=\rank(U)\}.
$$

\noindent\textbf{Claim 1.} $\CC_0$ is a base for $X$ consisting of focused open sets.

\noindent\textit{Proof.} Pick $x\in X$ and an open neighborhood $U$ of $x$. Since $\BB$ is a base, there exists $V\in\BB$ such that $x\in V\subseteq U\cap U_x$. By regularity, there exists $U'\in\BB$ such that $x\in U'\subseteq\cl(U')\subseteq V$. It is clear that $U'\in\CC_0$, as desired. $\blacksquare$

Given $U_0,U_1,U_2,U_3\in\BB$ such that $\cl(U_3)\subseteq U_2\subseteq U_1\subseteq\cl(U_1)\subseteq U_0$, define
$$
\CC_{U_0,U_1,U_2,U_3}=\{U_2\}\cup\{V\in\CC_0:V\subseteq U_0\setminus\cl(U_3)\}\cup\{X\setminus\cl(U_1)\},
$$
and observe that each $\CC_{U_0,U_1,U_2,U_3}$ is an open cover of $X$. Observe that there exists at least one such cover because $X$ is non-empty, and let $\CC_\xi$ for $\xi\in\kappa\setminus\{0\}$ enumerate them all (possibly with repetitions). By Lemma \ref{lemma_scattered}, it will be enough to show that $\bigcap\DD\neq\varnothing$ whenever $\DD$ satisfies conditions $(1)$, $(2)$ and $(3)$. So pick such a $\DD$.

Without loss of generality, assume that $\DD$ is closed under finite intersections. Set $\xi=\mini\{\rank(D):D\in\DD\}$. By considering $\DD'=\{D'\cap D:D\in\DD\}$ instead of $\DD$, where $D'\in\DD$ is a fixed element of rank $\xi$, we can also assume that $\rank(D)=\xi$ for every $D\in\DD$. By condition $(3)$ applied to the cover $\CC_0$, we can define
$$
\xi'=\mini\{\rank(U):U\in\CC_0\text{ and }D\subseteq U\text{ for some }D\in\DD\}.
$$

\noindent\textbf{Claim 2.} $\xi'=\xi$.

\noindent\textit{Proof.} It is easy to realize that $\xi\leq\xi'$. Now assume, in order to get a contradiction, that $\xi<\xi'$. Pick $U_1\in\CC_0$ and $D\in\DD$ such that $\rank(U_1)=\xi'$ and $D\subseteq U_1$. By the definition of $\CC_0$, we can also fix $U_0\in\BB$ such that $\cl(U_1)\subseteq U_0$ and $\rank(U_0)=\rank(U_1)$. Since $\CC_0$ consists of focused sets, there exists a unique $x\in U_1$ such that $\rank(x)=\xi'$. Notice that $x$ is also the unique element of $U_0$ whose rank is $\xi'$. The fact that $\rank(D)=\xi<\xi'=\rank(x)$ shows that $x\notin D$. Therefore, by Claim 1, it is possible to find $U_2\in\CC_0$ such that $x\in U_2\subseteq\cl(U_2)\subseteq U_1\setminus D$. By regularity, we can also fix $U_3\in\BB$ such that $x\in U_3\subseteq\cl(U_3)\subseteq U_2$. 

By condition $(3)$, there exist $V\in\CC_{U_0,U_1,U_2,U_3}$ and $D'\in\DD$ such that $D'\subseteq V$. It follows from condition $(2)$ that $V\neq U_2$ and $V\neq X\setminus\cl(U_1)$, hence $V\in\CC_0$ and $V\subseteq U_0\setminus\cl(U_3)$. It follows that $\rank(V)\geq\xi'$ by minimality, while on the other hand $\rank(V)\leq\rank(U_0)=\xi'$. In conclusion, we have shown that $\rank(V)=\xi'$. Since $V\in\CC_0$ is a focused set, it follows that there exists $y\in V$ such that $\rank(y)=\xi'$. This is a contradiction, since $x$ is the unique element of $U_0$ of rank $\xi'$. $\blacksquare$

By Claim 2, we can fix $U\in\CC_0$ and $D\in\DD$ such that $\rank(U)=\xi$ and $D\subseteq U$. Let $x$ be the unique element of $U$ of rank $\xi$. Clearly, the following claim will conclude the proof.

\noindent\textbf{Claim 3.} $x\in\bigcap\DD$.

\noindent\textit{Proof.} Pick any $D'\in\DD$. Observe that $D'\cap D\in\DD$ because $\DD$ is closed under finite intersections, hence $\rank(D'\cap D)=\xi$. Since $D\cap D'\subseteq D\subseteq U$, it follows that $x\in D\cap D'\subseteq D'$, as desired. $\blacksquare$
\end{proof}

We remark that the proof of Theorem \ref{theorem_scattered} would be more pleasant under the assumption that $X$ is zero-dimensional. However, it is not true that every scattered space is zero-dimensional (see \cite{solomon}).

\begin{corollary}\label{corollary_scattered}
Let $X$ be a Lindel\"{o}f scattered space. Then
$$
\Exp(X)=\weight(X).
$$
\end{corollary}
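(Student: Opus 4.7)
The plan is to derive this as a quick consequence of Theorem \ref{theorem_Exp} combined with Theorem \ref{theorem_scattered}. Since $X$ is Lindel\"{o}f, Theorem \ref{theorem_Exp} reduces the problem to showing
$$
\maxi\{\weight(X),\kcs(X)\}=\weight(X),
$$
and the inequality $\weight(X)\leq\maxi\{\weight(X),\kcs(X)\}=\Exp(X)$ is automatic. So the only real content is the inequality $\kcs(X)\leq\weight(X)$.

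To establish this, I would fix any compactification $\gamma X$ of $X$ (for instance, the one of weight $\weight(X)$ given by \cite[Theorem 3.5.2]{engelking}, though the choice does not matter thanks to Proposition \ref{proposition_kcs}). Setting $\kappa=\weight(X)$, Theorem \ref{theorem_scattered} applied to the extension $\gamma X$ yields open subsets $U_\xi$ of $\gamma X$ for $\xi\in\kappa$ such that $\bigcap_{\xi\in\kappa}U_\xi=X$. Taking complements in $\gamma X$ gives
$$
\gamma X\setminus X=\bigcup_{\xi\in\kappa}(\gamma X\setminus U_\xi),
$$
which exhibits $\gamma X\setminus X$ as a union of $\kappa$ many closed, hence compact, subsets of $\gamma X$. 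By Proposition \ref{proposition_kcs}, it follows that $\kcs(X)\leq\kappa=\weight(X)$.

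Plugging this into Theorem \ref{theorem_Exp} gives $\Exp(X)=\maxi\{\weight(X),\kcs(X)\}=\weight(X)$, as required. There is essentially no obstacle here: the whole argument is a two-line deduction from results already established in the paper, with Theorem \ref{theorem_scattered} doing all of the heavy lifting.
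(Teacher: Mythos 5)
Your argument is correct and is precisely the paper's intended proof: the paper also deduces the corollary from Theorem \ref{theorem_Exp} and Theorem \ref{theorem_scattered} by considering a compactification, with Proposition \ref{proposition_kcs} handling the passage to $\kcs$. You have simply written out the details that the paper leaves implicit.
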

\begin{proof}
This follows from Theorems \ref{theorem_Exp} and \ref{theorem_scattered}, by considering a compactification of $X$.	
\end{proof}

\begin{corollary}\label{corollary_hereditarily_lindelof}
Let $X$ be a hereditarily Lindel\"{o}f space. Then
$$
\Exp(X)=\maxi\{\Exp(\kernel(X)),\weight(X)\}.
$$
\end{corollary}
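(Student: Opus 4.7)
The plan is to prove the two inequalities $\Exp(X)\geq\max\{\Exp(\kernel(X)),\weight(X)\}$ and $\Exp(X)\leq\max\{\Exp(\kernel(X)),\weight(X)\}$ separately, relying on essentially every result developed in this section together with Theorem \ref{theorem_Exp} and Proposition \ref{proposition_finite_decomposition}.

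For the lower bound, I would argue as follows. The inequality $\weight(X)\leq\Exp(X)$ is immediate from Theorem \ref{theorem_Exp}, since $X$ is Lindelöf. For the inequality $\Exp(\kernel(X))\leq\Exp(X)$, I would use the fact that $\kernel(X)$ is a closed subspace of $X$ (being obtained as an intersection of the decreasing family of closed sets $X^{(\xi)}$). If $X$ is homeomorphic to a closed subspace of $\RRR^\lambda$, then the same embedding witnesses that $\kernel(X)$ is homeomorphic to a closed subspace of $\RRR^\lambda$, yielding $\Exp(\kernel(X))\leq\Exp(X)$. Combining the two inequalities gives the lower bound.

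For the upper bound, the strategy is to decompose $X$ as $X=\kernel(X)\cup(X\setminus\kernel(X))$ and apply Proposition \ref{proposition_finite_decomposition}. Both pieces are Lindelöf because $X$ is hereditarily Lindelöf. The subspace $X\setminus\kernel(X)$ is scattered by the very construction of the perfect kernel, so Corollary \ref{corollary_scattered} gives $\Exp(X\setminus\kernel(X))=\weight(X\setminus\kernel(X))\leq\weight(X)$. Therefore Proposition \ref{proposition_finite_decomposition} yields
\[
\Exp(X)\leq\max\{\Exp(\kernel(X)),\Exp(X\setminus\kernel(X)),\weight(X)\}\leq\max\{\Exp(\kernel(X)),\weight(X)\},
\]
as required.

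There is no real obstacle here, as the work has all been done in the preceding sections; the only mild subtlety is the edge case where $\kernel(X)=\varnothing$ (so that $X$ is itself scattered), but in that case one recovers $\Exp(X)=\weight(X)$ directly from Corollary \ref{corollary_scattered}, which is consistent with the formula under the convention $\Exp(\varnothing)=\omega\leq\weight(X)$.
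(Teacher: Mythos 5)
Your proof is correct and follows essentially the same route as the paper: the lower bound via the fact that $\kernel(X)$ is closed in $X$ (together with Theorem \ref{theorem_Exp} for the weight), and the upper bound via the decomposition $X=\kernel(X)\cup(X\setminus\kernel(X))$, Corollary \ref{corollary_scattered} applied to the scattered Lindel\"{o}f piece, and Proposition \ref{proposition_finite_decomposition}. Your remark on the edge case $\kernel(X)=\varnothing$ is a harmless extra precaution consistent with the paper's conventions.
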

\begin{proof}
Set $X_0=\kernel(X)$ and $X_1=X\setminus X_0$. The inequality $\geq$ is clear, since $X_0$ is a closed subspace of $X$. In order to prove the inequality $\leq$, observe that $X_1$ is a scattered Lindel\"{o}f space, hence $\Exp(X_1)=\weight(X_1)$ by Corollary \ref{corollary_scattered}. The desired result then follows from Proposition \ref{proposition_finite_decomposition}.
\end{proof}

\section{The pseudocharacter of countable subsets of products}\label{section_pseudocharacter}

The main purpose of this section is to give an upper bound for $\Exp(X)$ when $X$ is a countable space (see Corollary \ref{corollary_upper_bound}). This will follow from a general analysis of the pseudocharacter of countable subsets of products of first-countable spaces. As a by-product, we will also obtain improved versions of some of the examples given by Theorem \ref{theorem_example_general} (see Corollary \ref{corollary_topological_group}).

\begin{lemma}\label{lemma_pseudocharacter}
Let $Z$ be a first-countable space. Then $\psi(X,Z)\leq\dddd$ for every countable subset $X$ of $Z$.
\end{lemma}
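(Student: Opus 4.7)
The natural strategy is to write $X$ as an intersection of at most $\mathfrak{d}$-many open supersets obtained by combining local bases at the points of $X$ with a dominating family. The finite case being immediate (singletons are $G_\delta$ in any first-countable $T_1$ space, and $Z$ is Tychonoff, hence $T_1$), I would enumerate $X=\{x_n:n\in\omega\}$ and, using first-countability, fix a \emph{decreasing} local base $\{V_{n,k}:k\in\omega\}$ at each $x_n$ (so $V_{n,k+1}\subseteq V_{n,k}$). For every $f\in\omega^\omega$ set
$$
U_f=\bigcup_{n\in\omega}V_{n,f(n)},
$$
which is open in $Z$ and contains $X$. It then suffices to exhibit a family $\mathcal{F}\subseteq\omega^\omega$ of size at most $\mathfrak{d}$ such that $X=\bigcap_{f\in\mathcal{F}}U_f$.

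To locate the right $\mathcal{F}$, I would first upgrade the mod-finite notion of dominating to a pointwise one: call $\mathcal{F}\subseteq\omega^\omega$ \emph{pointwise dominating} if for every $h\in\omega^\omega$ some $f\in\mathcal{F}$ satisfies $h(n)\leq f(n)$ for all $n\in\omega$. Starting from any dominating family $\mathcal{D}$ of size $\mathfrak{d}$, the family $\mathcal{F}=\{f\vee c:f\in\mathcal{D},\,c\in\omega\}$, where $(f\vee c)(n)=\max\{f(n),c\}$, is pointwise dominating and still has size $\mathfrak{d}$: given $h$, pick $f\in\mathcal{D}$ eventually dominating $h$ past some $N$, and take $c=\max\{h(0),\ldots,h(N-1)\}$.

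The remaining verification is that $X=\bigcap_{f\in\mathcal{F}}U_f$. The inclusion $\subseteq$ is trivial. For the reverse, pick any $z\in Z\setminus X$. Since $Z$ is first-countable and $T_1$ and $z\neq x_n$ for every $n$, the set $\{k:z\notin V_{n,k}\}$ is a nonempty upward-closed subset of $\omega$ (upward-closedness uses the decreasing nature of the base), so
$$
h_z(n)=\min\{k\in\omega:z\notin V_{n,k}\}
$$
is a well-defined element of $\omega^\omega$. By pointwise domination, there exists $f\in\mathcal{F}$ with $f(n)\geq h_z(n)$ for every $n$, and then $V_{n,f(n)}\subseteq V_{n,h_z(n)}$ yields $z\notin V_{n,f(n)}$ for every $n$, hence $z\notin U_f$. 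Thus no $z\in Z\setminus X$ lies in the intersection, and the lemma follows.

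\textbf{Main obstacle.} The proof is essentially routine once one recognises that the naive use of a dominating family only gives eventual control of $h_z$, whereas we need pointwise control (since a single late coordinate $n$ with $z\in V_{n,f(n)}$ already puts $z$ in $U_f$). The only real step is therefore the mild upgrade from mod-finite to pointwise domination, which costs only a factor of $\omega$ and so does not spoil the $\mathfrak{d}$ bound.
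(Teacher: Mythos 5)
Your proof is correct and follows essentially the same route as the paper: enumerate $X$, take decreasing local bases at the $x_n$, and intersect the open sets $\bigcup_n V_{n,f(n)}$ over a family of size $\dddd$ that dominates everywhere. The only difference is cosmetic: the paper directly fixes an everywhere-dominating family of size $\dddd$ as a known fact, while you explicitly derive it from an eventually-dominating one.
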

\begin{proof}
Pick a countable subset $X$ of $Z$. The desired result is trivial if $X$ is empty, so assume that $X$ is non-empty and let $X=\{x_n:n\in\omega\}$ be an enumeration. Given $n\in\omega$, fix a local base $\{U_{n,m}:m\in\omega\}$ for $Z$ at $x_n$ such that $U_{n,0}\supseteq U_{n,1}\supseteq\cdots$. Also fix $\{f_\xi:\xi<\dddd\}\subseteq\omega^\omega$ such that for every $f:\omega\longrightarrow\omega$ there exists $\xi<\dddd$ such that $f(n)\leq f_\xi(n)$ for every $n\in\omega$. Set
$$
U_\xi=\bigcup_{n\in\omega}U_{n,f_\xi(n)}
$$
for $\xi<\dddd$, and observe that each $U_\xi$ is an open subset of $Z$. It is straightforward to verify that $X=\bigcap_{\xi<\dddd}U_\xi$, which concludes the proof.
\end{proof}

\begin{theorem}\label{theorem_pseudocharacter}
Let $\kappa$ be an infinite cardinal, and let $Z_\xi$ for $\xi\in\kappa$ be first-countable spaces. Set $Z=\prod_{\xi\in\kappa}Z_\xi$. Then $\psi(X,Z)\leq\maxi\{\dddd,\kappa\}$ for every countable subset $X$ of $Z$.
\end{theorem}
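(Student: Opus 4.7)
My plan is to reduce the problem to Lemma \ref{lemma_pseudocharacter} by splitting the coordinates into a countable subset that already distinguishes the points of $X$, and its complement. Assume $X$ is non-empty, fix an injective enumeration $X = \{x_n : n \in \omega\}$, and, for each $n \in \omega$ and $\eta \in \kappa$, fix a decreasing local base $\{W_{\eta, n, m} : m \in \omega\}$ at $x_n(\eta)$ in $Z_\eta$. First I would choose a countable $J_0 \subseteq \kappa$ such that $\pi_{J_0} \re X$ is injective, by picking, for each pair $m \neq n$, a coordinate where $x_m$ and $x_n$ disagree and collecting all such choices.

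Since $Z_{J_0} := \prod_{\xi \in J_0} Z_\xi$ is a countable product of first-countable spaces, it is first-countable, so Lemma \ref{lemma_pseudocharacter} supplies $\dddd$ many open sets in $Z_{J_0}$ whose intersection is $\pi_{J_0}[X]$. Pulling back along $\pi_{J_0}$ produces $\dddd$ many open subsets of $Z$, each containing $X$, with intersection $A := \pi_{J_0}^{-1}[\pi_{J_0}[X]] \supseteq X$. To cut $A$ down to $X$, I would add the family, indexed by $(n, \eta, m) \in \omega \times (\kappa \setminus J_0) \times \omega$, of open sets
$$D_{n, \eta, m} = \pi_\eta^{-1}[W_{\eta, n, m}] \cup \pi_{J_0}^{-1}\bigl[Z_{J_0} \setminus \{\pi_{J_0}(x_n)\}\bigr].$$
Each $D_{n, \eta, m}$ is open (since $Z_{J_0}$ is $T_1$) and contains $X$: the point $x_n$ lies in the first summand because $x_n(\eta) \in W_{\eta, n, m}$, while each $x_k$ with $k \neq n$ lies in the second by the injectivity of $\pi_{J_0}$ on $X$.

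Adding these $\omega \cdot \kappa \cdot \omega = \kappa$ many open sets to the previous $\dddd$ many brings the total to $\maxi\{\dddd, \kappa\}$, as required. The remaining verification that $A \cap \bigcap_{(n, \eta, m)} D_{n, \eta, m} = X$ should be routine: given $z$ in this intersection, $z \in A$ forces $\pi_{J_0}(z) = \pi_{J_0}(x_{n_0})$ for a unique $n_0$, and then $z \in D_{n_0, \eta, m}$ for every $\eta \in \kappa \setminus J_0$ and every $m \in \omega$ excludes the second summand, forcing $z(\eta) \in \bigcap_m W_{\eta, n_0, m} = \{x_{n_0}(\eta)\}$ by first-countability and $T_1$-ness of $Z_\eta$, so $z = x_{n_0} \in X$. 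The subtle point, and the reason the naive bound $\kappa^\omega$ (which would arise from choosing an independent witness coordinate for each of the countably many differences $z \neq x_n$) is avoided, is that the injective projection $\pi_{J_0}$ canonically identifies the \emph{unique} potential match $x_{n_0}$ for each $z \in A$, so only one additional witness coordinate $\eta$ is needed per pair $(n_0, z)$.
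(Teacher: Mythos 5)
Your argument is correct, but it takes a genuinely different route from the paper. The paper proceeds by transfinite induction on $\kappa$: it fixes an increasing chain of coordinate sets $\Omega_\xi\subseteq\kappa$ of size $<\kappa$ with union $\kappa$, whose first member already separates the points of $X$, applies the inductive hypothesis to each subproduct $\prod_{\eta\in\Omega_\xi}Z_\eta$, and pulls the resulting families back; the separation property of $\Omega_0$ together with the monotonicity of the chain is what forces a point lying in all the pulled-back sets to agree everywhere with a single element of $X$. You instead make a single reduction: pick one countable $J_0\subseteq\kappa$ on which $\pi_{J_0}\re X$ is injective, apply Lemma \ref{lemma_pseudocharacter} only to the countable (hence first-countable) subproduct $\prod_{\xi\in J_0}Z_\xi$, and then pin down the remaining coordinates with the $\kappa$ many explicitly defined open sets $D_{n,\eta,m}=\pi_\eta^{-1}[W_{\eta,n,m}]\cup\pi_{J_0}^{-1}\bigl[Z_{J_0}\setminus\{\pi_{J_0}(x_n)\}\bigr]$, using that singletons are closed and that in a $T_1$ space the intersection of a local base at a point is that point (both available here since all spaces are Tychonoff). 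The two proofs share the same key idea --- a separating set of coordinates identifies the unique candidate $x_{n_0}\in X$ for any point lying over the trace of $X$ --- but your version avoids the induction and the chain construction entirely, is more self-contained, and makes visible that $\dddd$ is only needed on countably many coordinates while the other $\kappa$ coordinates contribute only $\kappa$ many sets of a simple ``match or avoid'' form; the paper's inductive scheme is more uniform but requires setting up the $\Omega_\xi$'s. The only loose ends in your write-up are trivial: the empty $X$ is handled by $\{\varnothing\}$, and degenerate cases such as $|X|=1$ or $J_0=\kappa$ cause no harm.
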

\begin{proof}
We will proceed by transfinite induction on $\kappa$. The case $\kappa=\omega$ is given by Lemma \ref{lemma_pseudocharacter}. Now assume that $\kappa$ is an uncountable cardinal and that the desired result holds for all infinite cardinals below $\kappa$. Pick a countable subset $X$ of $Z$. Fix $\Omega_\xi\subseteq\kappa$ for $\xi\in\kappa$ so that the following conditions are satisfied:

\begin{enumerate}
\item $x\re\Omega_0\neq x'\re\Omega_0$ whenever $x,x'\in X$ are distinct,
\item $\Omega_\xi\subseteq\Omega_{\xi'}$ whenever $\xi\leq\xi'$,
\item $|\Omega_\xi|<\kappa$ for each $\xi$,
\item $\bigcup_{\xi\in\kappa}\Omega_\xi=\kappa$.
\end{enumerate}

Given $\Omega\subseteq\kappa$, set $Z(\Omega)=\prod_{\xi\in\Omega}Z_\xi$. Given $\xi\in\kappa$, denote by $\pi_\xi:Z\longrightarrow Z(\Omega_\xi)$ the natural projection, which is of course obtained by setting $\pi_\xi(z)=z\re\Omega_\xi$. By condition $(3)$ and the inductive hypothesis, for every $\xi\in\kappa$ we can fix a collection $\OO_\xi$ consisting of open subsets of $Z(\Omega_\xi)$ such that $|\OO_\xi|\leq\maxi\{\dddd,\kappa\}$ and $\bigcap\OO_\xi=\pi_\xi[X]$. Now define
$$
\OO'_\xi=\{U\times Z(\kappa\setminus\Omega_\xi):U\in\OO_\xi\}
$$
for $\xi\in\kappa$, where we identify $Z$ with $Z(\Omega_\xi)\times Z(\kappa\setminus\Omega_\xi)$ in the obvious way. Observe that each $|\OO'_\xi|\leq\maxi\{\dddd,\kappa\}$. Therefore, the following claim will conclude the proof.

\noindent\textbf{Claim.} $\bigcap_{\xi\in\kappa}\bigcap\OO'_\xi=X$.

\noindent\textit{Proof.} The inclusion $\supseteq$ is trivial. In order to prove the inclusion $\subseteq$, pick $z$ that belongs to the left-hand side. It is easy to realize that $\pi_\xi(z)\in\pi_\xi[X]$ for each $\xi$. So there exist $x_\xi\in X$ for $\xi\in\kappa$ such that each $\pi_\xi(x_\xi)=\pi_\xi(z)$. However, using conditions $(1)$ and $(2)$, one sees that there must be $x\in X$ such that each $x_\xi=x$. Finally, by condition $(4)$, it is clear that $z=x\in X$, as desired. $\blacksquare$
\end{proof}

\begin{corollary}\label{corollary_upper_bound}
Let $X$ be a countable space. Then $\Exp(X)\leq\cccc$.
\end{corollary}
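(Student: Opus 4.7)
The plan is to apply the formula $\Exp(X)=\maxi\{\weight(X),\kcs(X)\}$ from Theorem \ref{theorem_Exp} and bound both terms by $\cccc$. This formula is available because $X$, being countable, is Lindel\"of.

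First I would verify that $\weight(X)\leq\cccc$. Since $X$ is a $T_0$ space of cardinality $\omega$, its topology has at most $2^\omega=\cccc$ members, so the whole topology serves as a base of size at most $\cccc$. Consequently, $X$ embeds into $[0,1]^\kappa$ for some $\kappa\leq\cccc$; let $Z$ denote the closure of (the image of) $X$ in $[0,1]^\kappa$, which is a compactification of $X$.

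Next, to bound $\kcs(X)$, I would invoke Theorem \ref{theorem_pseudocharacter} with each factor equal to the first-countable space $[0,1]$, obtaining $\psi(X,[0,1]^\kappa)\leq\maxi\{\dddd,\kappa\}\leq\cccc$. Intersecting the witnessing open sets with $Z$ produces a family $\OO$ of at most $\cccc$ open subsets of $Z$ with $\bigcap\OO=X$. Their complements in $Z$ are closed in the compact space $Z$, hence compact, and they cover $Z\setminus X$; so by Proposition \ref{proposition_kcs} we conclude $\kcs(X)=\kc(Z\setminus X)\leq\cccc$.

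Plugging these bounds back into the formula yields $\Exp(X)\leq\maxi\{\weight(X),\kcs(X)\}\leq\cccc$. There is no real obstacle here: the heavy lifting was performed by Theorem \ref{theorem_pseudocharacter}, and the remainder is a direct assembly of earlier machinery.
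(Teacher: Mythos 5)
Your argument is correct and is essentially the paper's own proof: bound $\weight(X)\leq\cccc$, embed $X$ in a compact cube, apply Theorem \ref{theorem_pseudocharacter} to get pseudocharacter at most $\cccc$, translate this into $\kcs(X)\leq\cccc$ via Proposition \ref{proposition_kcs}, and conclude with Theorem \ref{theorem_Exp}. The only cosmetic difference is that the paper uses zero-dimensionality to embed $X$ into the Cantor cube $2^\cccc$, whereas you embed it into the Tychonoff cube $[0,1]^\kappa$ with $\kappa\leq\cccc$; both cubes have first-countable factors, so Theorem \ref{theorem_pseudocharacter} applies equally well.
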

\begin{proof}
Observe that $\weight(X)\leq\cccc$ because $X$ is countable. Therefore, since $X$ is zero-dimensional by \cite[Corollary 6.2.8]{engelking}, we can assume that $X$ is a subspace of $2^\cccc$ by \cite[Theorem 6.2.16]{engelking}. Observe that $\cl(X)$ is a compactification of $X$, where $\cl$ denotes closure in $2^\cccc$. Therefore, by Theorem \ref{theorem_pseudocharacter}, there exist open subsets $U_\xi$ of $2^\cccc$ for $\xi\in\cccc$ such that $X=\bigcap_{\xi\in\cccc}U_\xi$. It follows that
$$
\cl(X)\setminus X=\bigcup_{\xi\in\cccc}(\cl(X)\setminus U_\xi),
$$
which shows that $\kcs(X)\leq\cccc$ by Proposition \ref{proposition_kcs}. In conclusion, an application of Theorem \ref{theorem_Exp} yields the desired result.
\end{proof}

\begin{corollary}\label{corollary_exact_pseudocharacter}
Let $\kappa\geq\dddd$ be a cardinal, and let $Z_\xi$ for $\xi\in\kappa$ be first-countable spaces such that each $|Z_\xi|\geq 2$. Set $Z=\prod_{\xi\in\kappa}Z_\xi$. Then $\psi(X,Z)=\kappa$ for every non-empty countable subset $X$ of $Z$.
\end{corollary}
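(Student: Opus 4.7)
The plan is to prove this as a two-sided inequality. The upper bound $\psi(X,Z)\leq\kappa$ is immediate from Theorem \ref{theorem_pseudocharacter}: since $\kappa\geq\dddd$, we get $\psi(X,Z)\leq\maxi\{\dddd,\kappa\}=\kappa$. So the work lies in the lower bound $\psi(X,Z)\geq\kappa$.

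The case $\kappa=\omega$ is vacuous, since $\psi(S,Z)\geq\omega$ by definition. So assume $\kappa$ is uncountable, and suppose towards a contradiction that there exist open subsets $U_\alpha$ of $Z$ for $\alpha<\lambda$ with $\lambda<\kappa$ and $\bigcap_{\alpha<\lambda}U_\alpha=X$. Fix $x\in X$ using the assumption that $X$ is non-empty. By the definition of the product topology, for each $\alpha<\lambda$ we may choose a basic open neighborhood $V_\alpha$ of $x$ with $V_\alpha\subseteq U_\alpha$, where $V_\alpha=\prod_{\xi\in\kappa}W_{\alpha,\xi}$ and $W_{\alpha,\xi}=Z_\xi$ for every $\xi$ outside some finite set $F_\alpha\subseteq\kappa$. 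Set $\Omega=\bigcup_{\alpha<\lambda}F_\alpha$, so $|\Omega|\leq\lambda\cdot\omega<\kappa$, hence $|\kappa\setminus\Omega|=\kappa$.

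Now exploit $|Z_\xi|\geq 2$: for every $\xi\in\kappa\setminus\Omega$ pick some $y_\xi\in Z_\xi\setminus\{x(\xi)\}$ and define $z^\xi\in Z$ by $z^\xi(\xi)=y_\xi$ and $z^\xi(\xi')=x(\xi')$ for $\xi'\neq\xi$. Each $z^\xi$ belongs to every $V_\alpha$, because $V_\alpha$ places no constraint on coordinates outside $F_\alpha\subseteq\Omega$ and agrees with $x$ on $F_\alpha$. Hence $z^\xi\in\bigcap_{\alpha<\lambda}V_\alpha\subseteq\bigcap_{\alpha<\lambda}U_\alpha=X$. Since the assignments $z^\xi$ are pairwise distinct as $\xi$ ranges over $\kappa\setminus\Omega$ (different $\xi$'s disagree with $x$ on different unique coordinates), this produces $\kappa>\omega$ distinct elements of $X$, contradicting countability of $X$.

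The main (and only nontrivial) step is the lower bound, and even there the single obstacle is noticing that the cardinal arithmetic $\lambda\cdot\omega<\kappa$ requires $\kappa$ to be uncountable, which is why the trivial case $\kappa=\omega$ has to be isolated at the outset. No additional set-theoretic machinery is needed, and first-countability of the factors plays no role in the lower bound; it enters only through Theorem \ref{theorem_pseudocharacter} for the upper bound.
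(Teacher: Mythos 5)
Your proof is correct, but your lower-bound argument takes a different route from the paper's. The paper reduces to the single-point case: fixing $x\in X$, it enlarges the given family $\OO$ by the countably many open sets $Z\setminus\{z\}$ for $z\in X\setminus\{x\}$ (using that points are closed), obtaining fewer than $\kappa$ open sets whose intersection is $\{x\}$, and then derives a contradiction from the cited fact that $2^\kappa$ has pseudocharacter $\kappa$ at each point. You instead argue directly: shrinking each $U_\alpha$ to a basic neighborhood of $x$ with finite support, the union $\Omega$ of the supports has size $<\kappa$, and altering $x$ at any single coordinate outside $\Omega$ produces $\kappa$-many distinct points that must lie in $\bigcap_\alpha U_\alpha=X$, contradicting countability of $X$. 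The two arguments rest on the same finite-support phenomenon, but yours is self-contained (it in effect reproves the Juh\'asz fact rather than citing it) and exploits the countability of $X$ to avoid the reduction to a point, whereas the paper's is shorter modulo the citation and uses the countability of $X$ only to keep the auxiliary family $\{Z\setminus\{z\}:z\in X\setminus\{x\}\}$ small. One cosmetic remark: your isolation of the case $\kappa=\omega$ is unnecessary, since $\kappa\geq\dddd$ forces $\kappa$ to be uncountable; and your passing claims are accurate, including that first-countability enters only through the upper bound via Theorem \ref{theorem_pseudocharacter}.
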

\begin{proof}
Pick a countable subset $X$ of $Z$. First observe that $\psi(X,Z)\leq\kappa$ by Theorem~\ref{theorem_pseudocharacter}. Now assume, in order to get a contradiction, that $\psi(X,Z)<\kappa$. Let $\OO$ be a collection of open subsets of $Z$ such that $|\OO|<\kappa$ and $\bigcap\OO=X$. Fix $x\in X$, then define
$$
\OO'=\OO\cup\{Z\setminus\{z\}:z\in X\setminus\{x\}\}.
$$
It is clear that $|\OO'|<\kappa$ and $\bigcap\OO'=\{x\}$. Since each $|Z_\xi|\geq 2$, this contradicts the fact that $2^\kappa$ has pseudocharacter $\kappa$ at each point (see \cite[5.3.(b)]{juhasz}).
\end{proof}

Before stating the next two results, we remind the reader that a product of at most $\cccc$ separable spaces is separable (see \cite[Corollary 2.3.16]{engelking}).

\begin{corollary}\label{corollary_dense_subspace}
Let $\kappa$ be a cardinal such that $\dddd\leq\kappa\leq\cccc$, and let $Z_\xi$ for $\xi\in\kappa$ be compact separable first-countable spaces such that each $|Z_\xi|\geq 2$. Set $Z=\prod_{\xi\in\kappa}Z_\xi$. If $X$ is a countable dense subspace of $Z$ then $\Exp(X)=\kappa$.
\end{corollary}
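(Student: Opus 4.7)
The plan is to apply Theorem~\ref{theorem_Exp}, which yields $\Exp(X)=\max\{\weight(X),\kcs(X)\}$ since $X$ is countable and therefore Lindel\"of. Because $X$ is dense in the nonempty compact Hausdorff space $Z$, the space $Z$ is itself a compactification of $X$, so Proposition~\ref{proposition_kcs} gives $\kcs(X)=\kc(Z\setminus X)$; moreover, since closed and compact subsets of the compact Hausdorff space $Z$ coincide, taking complements shows that $\psi(X,Z)$ and $\kc(Z\setminus X)$ agree whenever both are infinite. The task thus reduces to computing $\psi(X,Z)$ and bounding $\weight(X)$.

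For the lower bound, I would invoke Corollary~\ref{corollary_exact_pseudocharacter} directly: its hypotheses are all met, since $\kappa\geq\dddd$, each $Z_\xi$ is first-countable with $|Z_\xi|\geq 2$, and $X$ is a nonempty countable subset of $Z$ (nonempty because it is dense in the nonempty $Z$). This yields $\psi(X,Z)=\kappa$, hence $\kcs(X)=\kappa$, and therefore $\Exp(X)\geq\kappa$ via Theorem~\ref{theorem_Exp}.

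For the upper bound I need $\weight(X)\leq\kappa$. Enumerate $X=\{x_n:n\in\omega\}$. For each $\xi\in\kappa$ the projection $\pi_\xi[X]$ is countable, and at each of its points I pick a countable local base in the first-countable space $Z_\xi$. Pulling these neighborhoods back through $\pi_\xi$ and intersecting with $X$, as $\xi$ ranges over $\kappa$, produces a subbase for the subspace topology on $X$; this is a routine verification from the definition of the product topology, using that the $\xi$-coordinate of any point of $X$ actually lies in $\pi_\xi[X]$. Since this subbase has size at most $\kappa\cdot\omega=\kappa$, it follows that $\weight(X)\leq\kappa$, and Theorem~\ref{theorem_Exp} then delivers $\Exp(X)=\kappa$.

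No step looks like a genuine obstacle: both bounds reduce to results already established, with the compactness of each $Z_\xi$ serving only to make $Z$ itself a compactification of $X$, thereby sparing us any detour through $\beta X$. The separability of the factors is used only implicitly, through the remark preceding the statement, to guarantee that a countable dense $X$ exists at all.
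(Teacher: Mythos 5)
Your proof is correct and follows essentially the same route as the paper: view $Z$ as a compactification of $X$, get $\psi(X,Z)=\kappa$ from Corollary~\ref{corollary_exact_pseudocharacter}, translate this into $\kcs(X)=\kappa$, and finish with Theorem~\ref{theorem_Exp}. The only difference is that you spell out the routine details (the equivalence of $\psi(X,Z)$ with $\kc(Z\setminus X)$ and the bound $\weight(X)\leq\kappa$ via countable local bases in the factors) that the paper leaves implicit; these verifications are accurate.
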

\begin{proof}
Pick a countable dense subspace $X$ of $Z$. The desired result follows from Theorem \ref{theorem_Exp} and Corollary \ref{corollary_exact_pseudocharacter}, since $Z$ is a compactification of $X$.
\end{proof}

\begin{corollary}\label{corollary_topological_group}
Let $\kappa$ be a cardinal such that $\dddd\leq\kappa\leq\cccc$. Then there exists a countable (crowded) topological group $X$ such that $\Exp(X)=\kappa$.
\end{corollary}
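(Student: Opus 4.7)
The plan is straightforward: realize $X$ as a countable dense subgroup of the compact topological group $Z=2^\kappa$ (with coordinatewise addition modulo $2$), and then invoke Corollary \ref{corollary_dense_subspace}. Since $\kappa\leq\cccc$ and each factor $2$ is separable, the product $Z$ is separable, so I can fix a countable dense subset $D\subseteq Z$. Let $X=\langle D\rangle$ be the subgroup of $Z$ generated by $D$. Every element of $X$ is a finite sum of elements of $D$, so $X$ is countable; and since $X\supseteq D$, it is dense in $Z$, and as a subgroup of the topological group $Z$ it is itself a topological group.

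The crowdedness of $X$ is automatic from density: if some $x\in X$ were isolated in $X$, there would be an open $U\subseteq Z$ with $U\cap X=\{x\}$; but $Z$ is crowded (as $\kappa$ is infinite), so $U\setminus\{x\}$ is a non-empty open subset of $Z$ that must meet $X$ by density, a contradiction. A final application of Corollary \ref{corollary_dense_subspace}, taking each $Z_\xi=2$, delivers $\Exp(X)=\kappa$.

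I do not foresee a serious obstacle. The heavy lifting has all been done in the earlier sections, and this corollary reduces to a routine verification that the classical construction of a countable dense subgroup of $2^\kappa$ satisfies the hypotheses of Corollary \ref{corollary_dense_subspace}. If one preferred a topologically richer example, one could replace $2$ by $\mathbb{T}$ or any other compact separable first-countable topological group with at least two elements, but $2^\kappa$ is the most economical choice.
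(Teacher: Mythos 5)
Your proposal is correct and is essentially identical to the paper's proof: both take a countable dense subset $D$ of $2^\kappa$, let $X$ be the subgroup it generates, and apply Corollary \ref{corollary_dense_subspace} with each factor equal to $2$. The extra verifications you spell out (countability, density, crowdedness) are fine and are left implicit in the paper.
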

\begin{proof}
Pick a countable dense subset $D$ of $2^\kappa$, and let $X$ be the subgroup of $2^\kappa$ generated by $D$. It follows from Corollary \ref{corollary_dense_subspace} that $\Exp(X)=\kappa$.
\end{proof}

\section{Open questions}

Our first three questions aim at ``improving'' the examples given in Sections \ref{section_p} and \ref{section_bumping}. Recall that a space $X$ is \emph{extremally disconnected} if the closure of every open subset of $X$ is open.

\begin{question}
For which cardinals $\kappa$ such that $\pppp\leq\kappa\leq\cccc$ does there exist an extremally disconnected countable crowded space $X$ such that $\Exp(X)=\kappa$?
\end{question}

We do not know whether the assumption that $\dddd\leq\kappa\leq\cccc$ in Corollary \ref{corollary_topological_group} can be weakened to $\pppp\leq\kappa\leq\cccc$. This is the content of the next question. 

\begin{question}\label{question_topological_group}
For which cardinals $\kappa$ such that $\pppp\leq\kappa\leq\cccc$ does there exist a countable (crowded) topological group $X$ such that $\Exp(X)=\kappa$?
\end{question}

Recall that a space $X$ is \emph{homogeneous} if for all $x,y\in X$ there exists a homeomorphism $h:X\longrightarrow X$ such that $h(x)=y$. Since every topological group is clearly homogeneous, the following might be a first step towards answering Question \ref{question_topological_group}.

\begin{question}
For which cardinals $\kappa$ such that $\pppp\leq\kappa\leq\cccc$ does there exist a countable (crowded) homogeneous space $X$ such that $\Exp(X)=\kappa$?
\end{question}

Finally, it would be desirable to ``complete the picture'' regarding the results of Section \ref{section_pseudocharacter}. Notice that, by Corollary \ref{corollary_exact_pseudocharacter}, it only remains to answer Question \ref{question_pseudocharacter} in the case when $\kappa<\dddd$.

\begin{question}\label{question_pseudocharacter}
Assume that $\kappa$ is a given cardinal. What are the possible values of $\psi(X,Z)$ when $Z$ is a product of $\kappa$ first-countable spaces and $X$ is a countable subset of $Z$?
\end{question}

\subsection*{Acknowledgements}
The first-listed author acknowledges the support of the FWF grant P 35655. The second-listed author acknowledges the support of the FWF grants P 35655 and P 35588. The third-listed author acknowledges the support of the FWF grant I 5930.

\end{document}